\def\blfootnote{\gdef\@thefnmark{}\@footnotetext}
\theoremstyle{plain}
\newtheorem*{theorem*}{Theorem}
\newtheorem{theorem}{Theorem}[section]
\newtheorem{lemma}[theorem]{Lemma}
\newtheorem{proposition}[theorem]{Proposition}
\theoremstyle{remark}
\newtheorem{remark}[theorem]{Remark}
\theoremstyle{definition}
\newtheorem{definition}[theorem]{Definition}
\definecolor{darkgreen}{rgb}{0.0, 0.5, 0.0}
 \def\Z{{\mathbb{Z}}}
\def\mod{{\rm Map}}
\def\pmod{{\rm PMap}}
\def\Ends{{\rm Ends}}
 \def\Sym{{\rm Sym}}
\begin{document}
\blfootnote{\textup{2000} \textit{Mathematics Subject Classification}:57M07, 20F05, 20F38}
\blfootnote{\textit{Keywords}: Big mapping class groups, infinite-type surfaces, generating sets}

 \title[Small Sets of Topological Generators for Big Mapping Class Groups] {Small Sets of Topological Generators for Big Mapping Class Groups}

\author[T{\"{u}}l\.{i}n Altun{\"{o}}z, Celal Can Bellek, Em{\.{i}}r G{\"{u}}l,       Mehmetc\.{i}k Pamuk, and O\u{g}uz Y{\i}ld{\i}z ]{T{\"{u}}l\.{i}n Altun{\"{o}}z, Celal Can Bellek, Em{\.{i}}r G{\"{u}}l,      Mehmetc\.{i}k Pamuk, and O\u{g}uz Y{\i}ld{\i}z}
\address{Faculty of Engineering, Ba\c{s}kent University, Ankara, Turkey} 
\email{tulinaltunoz@baskent.edu.tr} 
\address{Department of Mathematics, Middle East Technical University,
 Ankara, Turkey}
\email{celal.bellek@metu.edu.tr}
\address{Department of Mathematics, Middle East Technical University,
 Ankara, Turkey}
\email{gul.emir@metu.edu.tr} 
\address{Department of Mathematics, Middle East Technical University,
 Ankara, Turkey}
 \email{mpamuk@metu.edu.tr}
 \address{Department of Mathematics, Middle East Technical University,
 Ankara, Turkey}
  \email{oguzyildiz16@gmail.com}

\begin{abstract}  
Let $S(n)$ be the infinite-type surface with infinite genus and $n \in 
\mathbb{N}$ ends, all of which are accumulated by genus. The mapping class group of this surface, $\mathrm{Map}(S(n))$, is a Polish group that is not countably generated, but it is countably topologically generated. This paper focuses on finding minimal sets of generators for $\mathrm{Map}(S(n))$. We show that for $n \ge 8$, $\mathrm{Map}(S(n))$ is topologically generated by three elements, and for $n \ge 3$, it is topologically generated by four elements. We also establish a generating set of two elements for the Loch Ness Monster surface $S(1)$, and a generating set of three elements for the Jacob's Ladder surface $S(2)$.
\end{abstract}

\maketitle
\setcounter{secnumdepth}{2}
\setcounter{section}{0}

\section{Introduction}

The mapping class group of a surface $S$, denoted $\mod(S)$, is the group of isotopy classes of orientation-preserving self-homeomorphisms of $S$ that fix the boundary components (if any) pointwise and the ends setwise.  It is a fundamental object in low-dimensional topology that encodes the symmetries of the surface. For surfaces of finite type, those with a finitely generated fundamental group, the structure of this group is well-understood. A classical result shows that it is finitely generated by Dehn twists~\cite{Dehn1987, Humphries1979, Lickorish1964}. Research has since focused on finding minimal generating sets, culminating in the proof by Wajnryb, later refined by Korkmaz, that $\mod(S)$ can be generated by just two elements for a closed surface of genus $g \ge 2$~\cite{Korkmaz2004}.

A particularly fruitful line of inquiry has been the generation of mapping class groups by torsion elements, especially involutions. McCarthy and Papadopoulos first showed that for genus $g \ge 3$, $\mod(S)$ is generated by infinitely many conjugates of a single involution~\cite{McCarthy1987}. Luo demonstrated that a finite set of involutions is sufficient~\cite{Luo2000}, leading to a series of improvements. Brendle and Farb found a generating set of six involutions for $g \ge 3$~\cite{Brendle2004}, which was subsequently reduced to four for $g \ge 7$ by Kassabov~\cite{Kassabov2003}, and finally to three for $g \ge 6$ by Korkmaz~\cite{Korkmaz2020} and the fifth author~\cite{Yildiz2022}.

In recent years, attention has shifted to mapping class groups of infinite-type surfaces, 
often called big mapping class groups, see the overview of Aramayona-Vlamis \cite{Aramayona_2020} and the algebraic and topological analysis of Patel-Vlamis \cite{Patel2018}. More recently, Baik investigated the topological \emph{normal} generation of these groups, establishing conditions under which the normal closure of a single element generates the group \cite{baik2024topological}. These groups exhibit richer and more complex behavior. They are not finitely generated; however, when endowed with the compact-open topology, they become Polish groups. A key property of such groups is that they are countably topologically generated, meaning a dense subgroup can be countably generated.

Prior work on generating sets for the specific family $S(n)$ established bounds for finite topological generating sets (e.g. at most seven involutions~\cite{huynh}, later improved to five involutions~\cite{apyinvolution}) which we further sharpen here by relaxing the requirement that they be involutions, instead allowing for infinite order generators.
These results bring the known bounds for generating big mapping class groups much closer to those established for their finite-type counterparts and represent a significant step toward understanding the minimal generating sets for these complex groups.

The notation in this paper follows standard conventions in mapping class groups, with a few key specifics. The surface of infinite genus with $n$ ends, each accumulated by genus, is denoted by $S(n)$, and its mapping class group by $\mod(S(n))$. For simplicity, we abuse notation by denoting a diffeomorphism and its isotopy class with the same symbol. Group composition, $f \circ g$, is written concisely as $fg$.  The right-handed Dehn twist about a simple closed curve $a$ (i.e., $t_a$) is represented by the corresponding capital letter, such as $A$. In the context of the surfaces $S(n)$ with $n \ge 3$ ends, a system of double indices is used to specify both the genus component and the end component of the curve.  Specific Dehn twists are denoted by indexed capital letters, like $A_i^j$, $B_i^j$, and $C_{i-1}^j$, corresponding to curves $a_i^j$, $b_i^j$, and $c_{i-1}^j$ . The lower index, starting at $i=1, 2, 3, \dots$, primarily corresponds to the genus component or position along the infinite chain of genera (position of genera in one end in Figure~\ref{fig:inf}) .  The upper index, running from $j=1, 2, \dots, n$, specifies the end component or which of the $n$ accumulated ends the curve is near (the vertical or rotational position around the ends in Figure~\ref{fig:inf}). The inverse $X^{-1}$ of any mapping class $X$ is denoted by $\overline{X}$. Additionally, a homeomorphism with infinite support, known as a handle shift, is typically denoted by $h$ or $h_{i,j}$.  When a simplified notation is used in a proof (e.g., Theorem~\ref{thm:first} for $n \ge 8$), the indices may be reduced, where $A_{i}^{j}$, $B_{i}^{j}$, and $C_{i-1}^{j}$ are denoted as $A_j$, $B_j$, and $C_j$ respectively, with $i=1, 2, 3, \dots$ and $j=1, 2, \dots, n$. For example, $B_1^1$ may be denoted as $B_1$, $C_0^3$ as $C_3$, and $A_1^4$ as $A_4$, when the context is clear. 
\begin{figure}[H]
      \centering
      \includegraphics[width=0.5\textwidth]{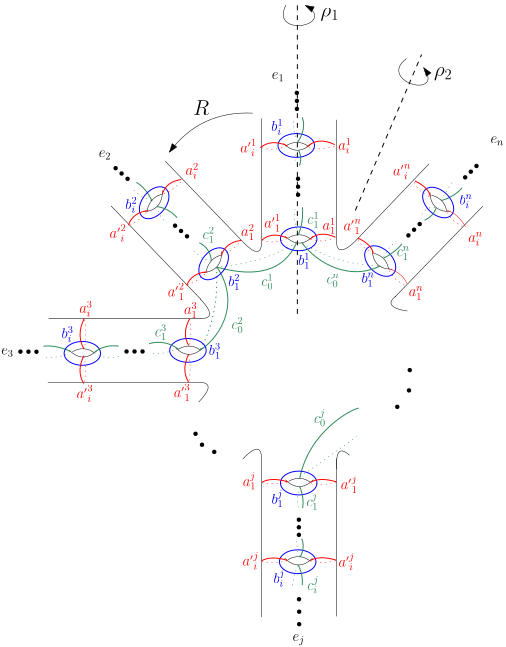}
      \caption{A diagram of the surface $S(n)$, an infinite-type surface with $n$ ends accumulated by genus, showing the standard system of curves used for generating sets. The index $j$ corresponds to the end, while $i$ corresponds to the genus level.}
      \label{fig:inf}
   \end{figure}

\subsection{Main Results}

In this paper, we investigate minimal topological generating sets for the mapping class group of a specific family of infinite-type surfaces, $S(n)$, which has infinite genus and $n$ ends, each accumulated by genus. For these groups, previous work established that $\mod(S(n))$ can be topologically generated by at most seven elements for $n \ge 3$, a bound that was later improved to five for $n \ge 6$~\cite{apyinvolution}.

The main contribution of this paper is to further reduce the number of required topological generators for $\mod(S(n))$. We establish the following results:
\begin{itemize}
     \item For $n\geq8$, $\mod(S(n))$ is topologically generated by three elements (Theorem~\ref{thm:first}).
     \item For $n \ge 3$, $\mod(S(n))$ is topologically generated by four elements (Theorem~\ref{thm:second}).
     \item For the Jacob's Ladder surface ($n=2$), $\mod(S)$ is topologically generated by three elements (Theorem~\ref{thm:third}).
     \item For the Loch Ness Monster surface ($n=1$),
     $\mod(S)$ is topologically generated by two elements (Theorem~\ref{thm:fourth}).
\end{itemize}

\begin{remark}
We suspect that the requirement for a fourth generator in the range $3 \le n \le 7$ is an artifact of our specific construction rather than an intrinsic algebraic property of the groups. Our proof for $n \ge 8$ relies on a generator $F_1$ supported disjointly on ends with indices $\{1, 3, 4, n-1, n\}$. When $3 \leq n \leq 7$, some intermediate elements obtained through the operations we use (conjugation by rotations) fail to stay disjointly supported and hence cannot be use to isolate Dehn twists. We conjecture that a construction using different generators, or an alternative strategy that avoids these specific issues would demonstrate that three elements suffice for all $n\geq 2$.
\end{remark}

The paper is structured as follows. In Section $2$, we establish the necessary background on infinite-type surfaces, their classification, and the key elements of their mapping class groups, such as Dehn twists and handle shifts. Finally, Section $3$ is dedicated to the proofs of our main theorems, giving explicit topological generating sets for $S(n)$ with $n\geq8$, $n\ge3$, as well as for the special cases of the Jacob's Ladder ($n=2$) and the Loch Ness Monster ($n=1$) surfaces.

\section{Preliminaries on infinite-type surfaces}
\subsection{Classification of infinite-type surfaces}
To classify surfaces of infinite type we use the \textit{space of ends} $\Ends(S)$, which records the distinct directions to infinity of the surface.  The construction begins with exiting sequences (nested connected open sets with compact boundary that eventually avoid every compact subset of $S$); $\Ends(S)$ is the set of equivalence classes of such sequences, equipped with the topology generated by the sets $U^*$, where $U\subset S$ is open with compact boundary and $U^*$ consists of those ends represented by exiting sequences eventually contained in $U$.  Intuitively, the space of ends describes how many different directions goes to infinity, how those directions relate to each other and whether those directions contain infinitely many genera or not. We say that an end is accumulated by genus if every element of the sequence defining the end contains infinitely many genera. The classification theorem for orientable infinite-type surfaces then asserts that two such surfaces are homeomorphic exactly when they have the same genus and number of boundary components and there exists a homeomorphism $\Ends(S_1)\cong\Ends(S_2)$. We follow the definitions and conventions used by Aramayona-Vlamis~\cite{Aramayona_2020}.

\begin{theorem}
    Let $S_1$ and $S_2$ be two infinite type surfaces, and $b_1$, $b_2$ be the number of boundary components and $g_1$, $g_2$ be the number of genus of these surfaces respectively. Then, $S_1 \cong S_2$ if and only if $g_1 = g_2$, $b_1=b_2$, and there is a homeomorphism
    \begin{align*}
        \Ends(S_1) \rightarrow \Ends(S_2)
    \end{align*}
    that restricts to a homeomorphism between their respective subspaces consisting of ends accumulated by genus.
\end{theorem}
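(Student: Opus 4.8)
The plan is to prove both directions of the biconditional, with essentially all of the work concentrated in the converse. The forward implication is formal: a homeomorphism $\varphi\colon S_1 \to S_2$ preserves the genus and the number of boundary components, both being topological invariants, and by functoriality of the ends construction it induces a homeomorphism $\Ends(S_1) \to \Ends(S_2)$, since $\varphi$ carries exiting sequences to exiting sequences, respects the defining equivalence relation, and maps the generating sets $U^*$ to generating sets. I would note here that for the converse to hold the end homeomorphism must additionally carry ends accumulated by genus to ends accumulated by genus; the map I construct below will have this refinement, and $\varphi$ clearly does as well.

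For the converse, the strategy is to realize the homeomorphism $S_1 \to S_2$ as a limit of homeomorphisms between compact pieces of compatible exhaustions. First I would fix exhaustions $S_i = \bigcup_k K_k^{(i)}$ by compact connected subsurfaces with $K_k^{(i)} \subset \mathrm{int}\bigl(K_{k+1}^{(i)}\bigr)$, arranged so that each complementary region $S_i \setminus K_k^{(i)}$ has components with connected boundary. Each such component determines a clopen subset of $\Ends(S_i)$, and as $k$ grows these clopen sets refine to a neighborhood basis of the ends. Since $\Ends(S_i)$ is compact, totally disconnected, and metrizable, the given end homeomorphism can be used to match the two clopen decompositions at each finite stage after passing to common refinements.

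Next I would build the homeomorphism inductively over the exhaustion. The base invariant is the change-of-coordinates principle for compact surfaces: any two compact orientable surfaces with the same genus and the same number of boundary components are homeomorphic, and any prescribed matching of boundary circles can be realized. At stage $k$ I match $K_k^{(1)}$ with a compact piece of $S_2$ carrying the same genus and boundary data, extending the homeomorphism already defined on $K_{k-1}^{(1)}$ and respecting the correspondence of complementary regions dictated by the fixed end-space homeomorphism. The genus is distributed among the pieces according to the genus-accumulation data, which is precisely where the refinement of the end homeomorphism to the subset of ends accumulated by genus is used.

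The main obstacle is the bookkeeping in the inductive step: one must arrange that successive partial homeomorphisms agree on the overlaps $K_{k-1}^{(1)}$ and that the nested clopen decompositions of the two end spaces remain compatible under a \emph{single} fixed homeomorphism, so that the union $\bigcup_k$ of the partial maps extends continuously over the ends to a bijection that is itself a homeomorphism. Verifying continuity at the ends — that the limit map sends the end determined by a nested sequence of regions to the corresponding image end — is the delicate point, and it relies on the clopen neighborhoods shrinking to points in the end topology. Once this compatibility is maintained throughout the induction, the colimit is a homeomorphism of the surfaces, completing the proof.
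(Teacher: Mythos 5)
The paper does not prove this statement at all: it is the classical Ker\'ekj\'art\'o--Richards classification of infinite-type surfaces, which the paper records as a known theorem following the conventions of Aramayona--Vlamis. So there is no paper proof to compare against; what you have written is a sketch of the standard Richards-style argument (compatible compact exhaustions, change of coordinates for compact surfaces, inductive matching of complementary regions), which is indeed the accepted route to this theorem. Your parenthetical observation is in fact the most valuable part of the proposal: as literally stated, the theorem is false without the refinement you introduce. The homeomorphism $\Ends(S_1)\to\Ends(S_2)$ must carry the subset of ends accumulated by genus onto the corresponding subset for $S_2$; otherwise the surface with two ends both accumulated by genus (Jacob's Ladder) and the surface with one end accumulated by genus plus one planar end would be declared homeomorphic, since both have infinite genus, no boundary, and a two-point space of ends. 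The correct statement is about a homeomorphism of pairs $(\Ends(S_1),\Ends_g(S_1))\to(\Ends(S_2),\Ends_g(S_2))$, and your construction is designed to produce precisely such a map.

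That said, as a proof your proposal is only an outline at the point where the real work lies. The inductive step needs two ingredients you name but do not supply: first, that every clopen subset of the end space is realized as the set of ends of some complementary region of a compact subsurface, which is what lets you convert the abstract end homeomorphism into a correspondence of complementary regions; and second, a back-and-forth scheme alternating between $S_1$ and $S_2$, since after choosing $K_k^{(1)}$ there is in general no compact subsurface of $S_2$ whose complementary regions match it on the nose --- one must enlarge on the $S_2$ side, then return to $S_1$, and so on, while also padding each matched pair of pieces with handles so that their finite genera agree (possible exactly because genus accumulates at the prescribed ends, which is where your refinement of the end homeomorphism is used). Without carrying out this bookkeeping --- which is the actual content of Richards' proof --- the argument remains a plan rather than a proof; but the plan is the correct one.
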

\subsection{Generating the big mapping class groups}
\subsection*{Pure mapping class group of an infinite-type surface} 
\begin{definition}
    \textit{The pure mapping class group}, denoted by $\pmod(S(n))$, is the subgroup of $\mod(S(n))$ such that it fixes $\Ends(S)$ pointwise.
\end{definition}

For the surfaces of infinite-type, $\mod(S(n))$ is not countably generated.  However, since it is a quotient of the group of orientation-preserving self-homeomorphisms of $S(n)$ (equipped with the compact-open topology), $\mod(S(n))$ inherits a topology. Because of this, $\mod(S(n))$ is a Polish group~\cite{Aramayona_2020}, meaning in particular that it is separable.  Therefore, $\mod(S(n))$ is topologically generated by a countable set.
We have the following exact sequence:
\begin{align*}
    1 \rightarrow \pmod(S(n)) \rightarrow \mod(S(n)) \rightarrow \Sym_n \rightarrow 1.
\end{align*}
Here, $\Sym_n$ is the symmetric group on $n$ letters and the last map is the projection defined by the action of a mapping class on the space of ends, which is the symmetric group on $n$ letters for $\mod(S(n))$. It follows that $\mod(S(n))$ is topologically generated by the generators of $\pmod(S(n))$ together with mapping classes whose image in $\Sym_n$ generate it.

\subsection*{Handle shifts}
The generators of these groups often include not only Dehn twists, but also homeomorphisms with infinite support called \textit{handle shifts}, as shown by Patel and Vlamis~\cite{Patel2018}.

Following~\cite{tez}, we define the handle shift as follows:
Consider the surface $\mathbb{R} \times [-1,1]$ with disks of radius $1/4$ removed and a copy of $S_1^1$ attached along the boundaries of the removed disks at every point $(n,0)$ where $n \in \mathbb{Z}$. This surface is called the \textit{the model surface of a handle shift} and denote it by $\Sigma$.

Note that $\Sigma$ is a surface with two ends accumulated by genus that correspond to $\pm \infty$ of $\mathbb{R}$ and two disjoint boundary components $\mathbb{R}\times\{-1\}$ and $\mathbb{R}\times\{1\}$. We call the end corresponding to $-\infty$ the repelling end and the one corresponding to $\infty$ the attracting end. We can embed $\Sigma$ to any infinite-type surface S with at least two ends accumulated by genus. We define $h: \Sigma \rightarrow \Sigma$ as
\begin{align*}
    h(x,y)=
    \begin{cases}
        (x+1,y) & \text{if }  y \in [-\dfrac{1}{2},\dfrac{1}{2}],\\
        (x+2-2y,y) & \text{if }  y \in [\dfrac{1}{2},1],\\
        (x+2+2y,y) & \text{if } y \in [-1, -\dfrac{1}{2}]
    \end{cases}
\end{align*}
on $\mathbb{R} \times [-1,1]$. This self-homeomorphism $h$ is called a \textit{handle shift}.

\begin{figure}[H]
      \centering
      \includegraphics[width=0.60\textwidth]{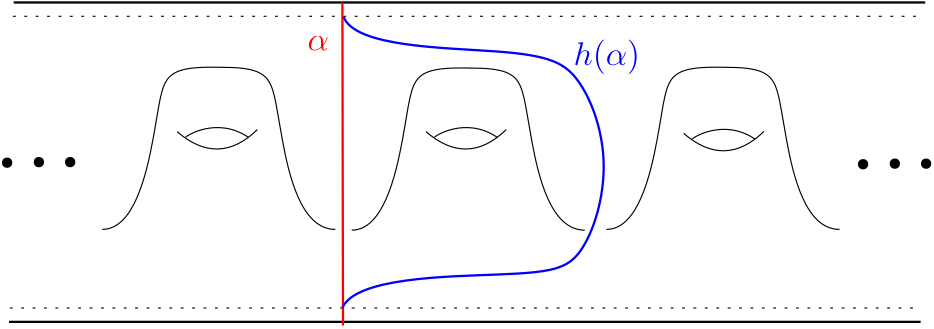}
      \caption{The action of a handle shift $h$ on a transverse curve $\alpha$. The model surface $\Sigma$ illustrates the shift of genera from one region to another}
      \label{fig:handleshift}
\end{figure}

Patel and Vlamis showed in their initial paper that for infinite-type surfaces with more than one end accumulated by genus, handle shifts and Dehn twists are required to topologically generate $\pmod(S)$ [Proposition 6.3, \citenum{Patel2018}]. Moreover Aramayona-Patel-Vlamis improved this result by proving that $\pmod(S)$ can be split as a semi-direct product of $\overline{\pmod_\mathrm{c}(S)}$ and a product of handle shifts \cite{arapatevla}. We state this result for the case relevant to us in this paper:
\begin{theorem}[{\cite[Corollary 6]{arapatevla}}]
    For $S(n)$,
    \[
    \pmod(S(n)) = \overline{\pmod_\mathrm{c}(S(n))} \rtimes \Z^{n-1}.
    \]
\end{theorem}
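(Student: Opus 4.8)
The plan is to realize the decomposition as a split short exact sequence
\[
1 \longrightarrow \overline{\pmod_\mathrm{c}(S(n))} \longrightarrow \pmod(S(n)) \xrightarrow{\ \Phi\ } \Z^{n-1} \longrightarrow 1,
\]
where $\Phi$ is a \emph{flux} homomorphism recording the asymptotic transport of genus toward each end. Label the $n$ ends $e_1,\dots,e_n$. For $f \in \pmod(S(n))$ and each end $e_j$, fix a separating curve $c_j$ cutting off $e_j$, and let $\Phi_j(f) \in \Z$ be the net number of handles that $f$ pushes into $e_j$, measured by comparing the genus trapped by $c_j$ with that trapped by $f(c_j)$. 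Since $f$ neither creates nor destroys genus, the total flux is conserved, so $\sum_{j=1}^n \Phi_j(f) = 0$ and $\Phi = (\Phi_1,\dots,\Phi_n)$ takes values in the rank-$(n-1)$ lattice $\{v \in \Z^n : \sum_j v_j = 0\} \cong \Z^{n-1}$. One checks that $\Phi$ is a homomorphism, and that it is continuous for the compact-open topology: each $\Phi_j(f)$ is determined by the isotopy class of $f(c_j)$, and these classes are locally constant as $f$ varies, so $\Phi$ is locally constant, hence continuous.

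For surjectivity and the splitting, I would embed $n-1$ pairwise disjoint copies of the model surface $\Sigma$ into $S(n)$, each connecting the end $e_1$ to a distinct end $e_{i+1}$, giving handle shifts $h_1,\dots,h_{n-1}$ with $\Phi_1(h_i) = -1$, $\Phi_{i+1}(h_i) = 1$, and all other components zero; that is, $h_i$ drains genus from $e_1$ and deposits it into $e_{i+1}$. Since only $e_1$ is shared among these shifts, one can route them through disjoint reservoirs of handles near $e_1$ and keep their supports pairwise disjoint. Their flux vectors form a basis of $\{v \in \Z^n : \sum_j v_j = 0\}$, so $\Phi$ is surjective; and because the supports are disjoint the $h_i$ commute, generating a subgroup $\cong \Z^{n-1}$ on which $\Phi$ restricts to an isomorphism. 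The inclusion of this subgroup is the desired homomorphic section $s \colon \Z^{n-1} \to \pmod(S(n))$.

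The substantive step is the identification $\ker\Phi = \overline{\pmod_\mathrm{c}(S(n))}$. The inclusion $\overline{\pmod_\mathrm{c}(S(n))} \subseteq \ker\Phi$ is easy: a compactly supported class transports no genus past any end, hence lies in $\ker\Phi$, and since $\Phi$ is locally constant it vanishes on the entire closure. The reverse inclusion carries the real content. Given $f$ with $\Phi(f)=0$, I would exhaust $S(n)$ by compact subsurfaces $K_1 \subset K_2 \subset \cdots$ and use the vanishing of every $\Phi_j(f)$, via the change-of-coordinates principle, to show that for each $m$ there is a compactly supported mapping class $g_m$ agreeing with $f$ on $K_m$; the point is that zero net flux prevents $f$ from carrying handles across the separating curves near $\partial K_m$, so the restriction $f|_{K_m}$ can be capped off by a homeomorphism supported in a larger compact subsurface. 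Then $g_m \to f$ in the compact-open topology, so $f \in \overline{\pmod_\mathrm{c}(S(n))}$. I expect this approximation — that vanishing flux is exactly the obstruction to being a limit of compactly supported classes — to be the main obstacle, being the heart of the Aramayona--Patel--Vlamis theorem.

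Assembling these pieces, $\Phi$ is a continuous surjective homomorphism with kernel $\overline{\pmod_\mathrm{c}(S(n))}$, and the section $s$ embeds $\Z^{n-1}$ as the handle-shift subgroup $\langle h_1,\dots,h_{n-1}\rangle$ complementary to the kernel. Hence the short exact sequence splits, and since $\overline{\pmod_\mathrm{c}(S(n))}$ is normal we obtain the internal semidirect product $\pmod(S(n)) = \overline{\pmod_\mathrm{c}(S(n))} \rtimes \Z^{n-1}$, with $\Z^{n-1}$ acting on the normal factor by conjugation by the handle shifts.
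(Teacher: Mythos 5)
The paper does not prove this statement at all: it is imported verbatim from Aramayona--Patel--Vlamis \cite{arapatevla} (their Corollary 6), so the only meaningful comparison is with that cited source rather than with anything internal to the paper. Your proposal is, in outline, a faithful reconstruction of the cited argument: the per-end genus-flux homomorphisms $\Phi_j$ are the homomorphisms constructed there (indexed in that paper by partitions of the ends accumulated by genus), the inclusion $\overline{\pmod_\mathrm{c}(S(n))}\subseteq\ker\Phi$ via continuity and vanishing on compactly supported classes is the same, and the splitting by $n-1$ pairwise disjointly supported, hence commuting, handle shifts whose flux vectors form a basis of the zero-sum sublattice of $\Z^n$ is exactly how the section is produced there. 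So the approach is the right one, not merely a plausible one.

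Two things separate your text from a proof. First, defining $\Phi_j(f)$ by ``comparing the genus trapped by $c_j$ with that trapped by $f(c_j)$'' compares two infinite quantities, since every end of $S(n)$ is accumulated by genus; the correct definition is the signed genus of the compact subsurface between $c_j$ and $f(c_j)$ (arranged, for instance, by passing to a curve far enough toward the end that it and its image are disjoint), after which independence of the choice of $c_j$, the homomorphism property, and the identity $\sum_j\Phi_j=0$ all require (routine) verification rather than a ``one checks.'' Second, and decisively, the inclusion $\ker\Phi\subseteq\overline{\pmod_\mathrm{c}(S(n))}$ is precisely the substantive content of the Aramayona--Patel--Vlamis theorem, and you state it as an expectation rather than prove it. Your exhaustion-and-capping plan for it is sound: choosing $K_m$ so that each complementary component is a one-ended neighborhood of a single end and taking $L\supseteq K_m\cup f(K_m)$ compact, vanishing flux guarantees that the region between $\partial_j K_m$ and $\partial_j L$ has the same genus and boundary data as the region between $f(\partial_j K_m)$ and $\partial_j L$, so the classification of compact surfaces with boundary lets one extend $f|_{K_m}$ to a homeomorphism supported in $L$, and these approximations converge to $f$ in the compact-open topology. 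But since this is exactly the step that carries the mathematical weight, what you have written is an accurate outline of the cited proof, not a self-contained argument.
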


This result shows that any set that topologically generates $\overline{\pmod_\mathrm{c}(S(n))}$ and that contains $n-1$ handle shifts with different attracting and repelling ends topologically generates the entire pure mapping class group.

\subsection{Special infinite-type surfaces}
\subsection*{The Loch Ness Monster surface} The closed surface with one end accumulated by genus is called the Loch Ness Monster Surface.
\begin{figure}[H]
      \centering
      \includegraphics[width=0.4\textwidth]{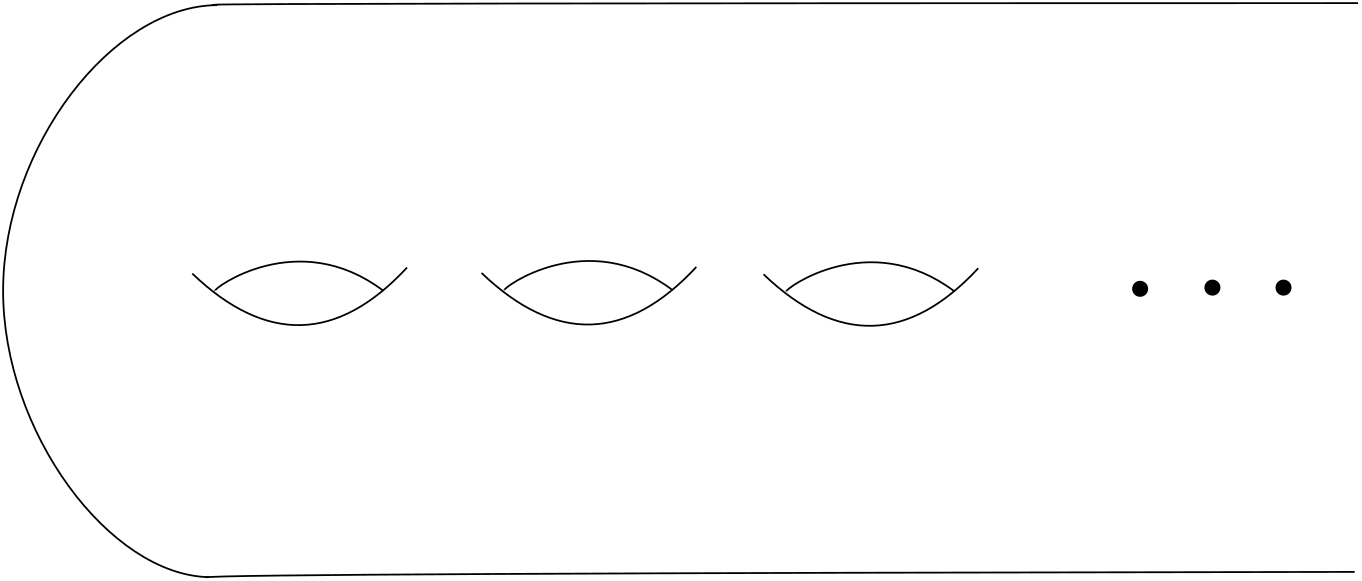}
      \caption{An embedding of the Loch Ness Monster surface, the infinite-genus surface with a single end.}
      \label{fig:model}
   \end{figure}

\subsection*{The Jacob's Ladder surface}\label{Jacob} The closed surface with two end accumulated by genus is called The Jacob's Ladder Surface.

\begin{figure}[H]
      \centering
      \includegraphics[width=0.4\textwidth]{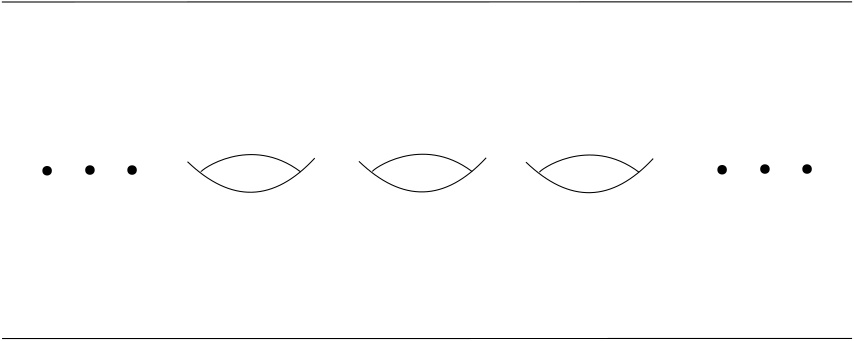}
      \caption{An embedding of the Jacob's Ladder surface, the infinite-genus surface with two ends}
      \label{Jacob's Ladder}
\end{figure}

\vskip 0.1cm
\noindent{\bf Acknowledgements.} 
 This work is supported by the Scientific and Technological Research Council of Turkey (TUBİTAK) [grant number 125F253]

\section{Minimal set of generators}
\subsection{Surfaces with more than two ends}
\vspace{2mm}

Our main strategy is to show that our generating sets contain certain elements in $\mod(S(n))$ that are enough to topologically generate $\mod(S(n))$. We will modify the following two lemmas from \cite{apyinvolution}.

\begin{lemma}[{\cite[Lemma~2.2]{apyinvolution}}]\label{lemm2.2apy}
    For $n \geq 3$, the group topologically generated by the elements
\begin{align*}
    \{\rho_1, \rho_2, A_1^1\overline{A_1^2}, B_1^1\overline{B_1^2}, C_0^1\overline{C_0^2}, h_{1,2}\}
\end{align*}
contains the Dehn twists $A_1^2\overline{A_2^2}$, $B_1^2\overline{B_2^2}$, $C_1^2\overline{C_2^2}$.
\end{lemma}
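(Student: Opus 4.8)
The plan is to exploit the conjugation identity $\phi\, t_a\, \phi^{-1}=t_{\phi(a)}$ and to move the curves underlying the three given ``difference'' generators onto the curves underlying the three targets by conjugating with an appropriate power of the handle shift $h_{1,2}$, using the rotations only to align end-indices. First I would record how each generator acts on the curves of Figure~\ref{fig:inf}. The rotations $\rho_1,\rho_2$ permute the upper (end) index $j$ while preserving the lower (genus-level) index $i$. The handle shift $h_{1,2}$, restricted to the strip $y\in[-\tfrac12,\tfrac12]$ on which its model is the clean translation $(x,y)\mapsto(x+1,y)$, shifts a curve by one genus level along the bi-infinite ladder joining ends $1$ and $2$; in particular $h_{1,2}(a_i^2)=a_{i+1}^2$, $h_{1,2}(b_i^2)=b_{i+1}^2$, $h_{1,2}(c_{i-1}^2)=c_i^2$, and across the seam the innermost end-$1$ curves are carried onto the innermost end-$2$ curves.

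Granting this action, the three targets are produced by a single family of conjugations. For the $a$-curves,
\[
h_{1,2}\bigl(A_1^1\,\overline{A_1^2}\bigr)h_{1,2}^{-1}
= t_{h_{1,2}(a_1^1)}\,\overline{t_{h_{1,2}(a_1^2)}}
= A_1^2\,\overline{A_2^2},
\]
and $B_1^2\,\overline{B_2^2}$, $C_1^2\,\overline{C_2^2}$ arise in the same way from $B_1^1\,\overline{B_1^2}$ and $C_0^1\,\overline{C_0^2}$ after conjugating by the power of $h_{1,2}$ dictated by the starting index of each curve family; the $c$-curves are indexed from $0$, so one extra application of $h_{1,2}$ is needed to reach level $1$, giving $h_{1,2}^2\bigl(C_0^1\,\overline{C_0^2}\bigr)h_{1,2}^{-2}=C_1^2\,\overline{C_2^2}$. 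Since all three difference generators are already supported on the two ends $\{1,2\}$ on which $h_{1,2}$ acts, the rotations $\rho_1,\rho_2$ serve here only to confirm that the three curve families are interchanged consistently; they become indispensable when this lemma is combined with its rotated copies.

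The step I expect to be the main obstacle is the behaviour of $h_{1,2}$ across the seam between the two ends: one must verify that $h_{1,2}$ genuinely carries $a_1^1$ to $a_1^2$ (and likewise $b_1^1\mapsto b_1^2$, $c_0^1\mapsto c_0^2$) rather than to some nonstandard curve lying in the transition region $y\in[\tfrac12,1]\cup[-1,-\tfrac12]$, where the model map is the shear $(x,y)\mapsto(x+2\mp2y,y)$ rather than a translation. I would resolve this by first isotoping each innermost curve into the clean-translation strip and then reading off the one-step shift directly from Figure~\ref{fig:inf}, so that the displayed conjugation is literally the exchange of two adjacent handles of the ladder. Should the seam identification require extra care, the same conclusion follows from a limiting argument: the infinite product of successive $h_{1,2}$-conjugates of a difference generator telescopes, its uncancelled factors being supported on regions that march off toward the end accumulated by genus, so the partial products stabilise on every compact set and converge in the compact-open topology to the desired single-level twist difference, keeping the argument inside the \emph{topologically} generated group as required.
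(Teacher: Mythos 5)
Your derivation of $B_1^2\overline{B_2^2}$ coincides with the paper's own first step (in the proof of Lemma~\ref{lemm2.2R.apy}) and is correct: $h_{1,2}(b_1^1)=b_1^2$ and $h_{1,2}(b_1^2)=b_2^2$, so a single conjugation handles the $b$-family. The gap is that your one-line conjugations for the $a$- and $c$-families rest on a description of the action of $h_{1,2}$ on the curves of Figure~\ref{fig:inf} that is false, and the failure is not the shear-region technicality you flag, so isotoping into the clean-translation strip cannot repair it. The paper's computations pin down the true action across the seam: in the proof of Lemma~\ref{lemm2.2R.apy} one has $(C_0^1\overline{C_0^3})^{h_{1,2}}=C_1^2\overline{C_0^3}$, i.e.\ $h_{1,2}(c_0^1)=c_1^2$, \emph{not} $c_0^2$; and in the proof of Theorem~\ref{thm:second} one has $F_1^{h_{1,2}R^2}={A'}_1^2B_2^2C_0^n$, i.e.\ $h_{1,2}(a_1^1)$ is the \emph{primed} curve $(a_1^2)'$, not $a_1^2$ (see also the parenthetical remark there about $(A_1^1)'\overline{(A_1^2)'}$). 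A handle pushed from end $1$ across the central part onto end $2$ lands one position further out, and with its $a$-curve on the ``wrong side,'' compared with your bookkeeping; only the curves $b_i^j$ behave as you claim. Consequently your displayed identity actually gives $h_{1,2}(A_1^1\overline{A_1^2})\overline{h_{1,2}}=(A_1^2)'\,\overline{A_2^2}\neq A_1^2\overline{A_2^2}$, and for the $c$-family $h_{1,2}^2(c_0^1)=h_{1,2}(c_1^2)=c_2^2$, while $h_{1,2}(c_0^2)$ cannot equal $c_1^2$ (a homeomorphism is injective on isotopy classes and already sends $c_0^1$ there), so no power of $h_{1,2}$ turns $C_0^1\overline{C_0^2}$ into $C_1^2\overline{C_2^2}$. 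Your fallback limiting argument does not close this either: convergence of telescoping products in the compact-open topology shows that \emph{some} limits exist in the topologically generated group, not that any of them is the specific twist difference wanted.

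The deeper misreading is your claim that $\rho_1,\rho_2$ ``serve here only to confirm consistency'': the rotations are exactly the mechanism by which the paper routes around the seam, and they are where the hypothesis $n\geq 3$ enters (your argument never uses a third end, so it would prove a statement the paper does not even assert for $n=2$). For the $a$-family the paper rotates to get $A_1^2\overline{A_1^3}$, uses the braid relation with $B_1^1\overline{B_1^2}$ to form the mixed element $B_1^1\overline{A_1^2}$, conjugates \emph{that} by $h_{1,2}$ --- legitimate because $h_{1,2}(b_1^1)=b_1^2$ and $h_{1,2}(a_1^2)=a_2^2$ both hold; the shift acts standardly on end-$2$ curves, just not on $a_1^1$ --- and then telescopes $(A_1^2\overline{B_1^1})(B_1^1\overline{B_1^2})(B_1^2\overline{A_2^2})=A_1^2\overline{A_2^2}$. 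For the $c$-family it rotates to get $C_0^2\overline{C_0^3}$, forms $C_0^1\overline{C_0^3}$ whose end-$3$ factor is inert under $h_{1,2}$, conjugates to get $C_1^2\overline{C_0^3}$, combines to get $C_0^1\overline{C_1^2}$, and conjugates by $h_{1,2}$ once more. Any correct proof must make some such detour through a third end or through mixed elements; direct conjugation by powers of the handle shift alone cannot succeed.
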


\begin{lemma}[{\cite[Lemma~2.3]{apyinvolution}}]\label{lemm2.3apy}
For $n \geq 3$, the group topologically generated by the elements
\begin{align*}
    \{\rho_1, \rho_2, A_1^2\overline{A_2^2}, B_1^2\overline{B_2^2},C_1^2\overline{C_2^2}, h_{1,2}\}
\end{align*}
contains the Dehn twists $A_i^j$, $B_i^j$, $C_{i-1}^j$ for all $1 \leq  j\leq n$ and for all $i \geq 1$.
\end{lemma}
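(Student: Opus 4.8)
The plan is to upgrade the three along-chain difference elements $A_1^2\overline{A_2^2}$, $B_1^2\overline{B_2^2}$, $C_1^2\overline{C_2^2}$ to individual Dehn twists in the second end by a telescoping limit built from the handle shift, and then to transport these twists to every end by conjugating with the rotations $\rho_1,\rho_2$. Throughout, let me write a generic difference as $X_1\overline{X_2}$, where $X_i=t_{x_i}$ and $x_i$ runs through one of the three families $a_i^2$, $b_i^2$, $c_i^2$. The one property of $h:=h_{1,2}$ that I need is that (after possibly replacing it by its inverse) it shifts the chain in the second end, $h(x_i)=x_{i+1}$ up to isotopy; combined with the conjugation rule $t_{h(c)}=h\,t_c\,h^{-1}$ this gives
\begin{align*}
h^m\,(X_i\overline{X_{i+1}})\,h^{-m}=X_{i+m}\,\overline{X_{i+m+1}}\qquad(m\ge0).
\end{align*}

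First I would carry out the telescoping. Consider the partial products of consecutive conjugates,
\begin{align*}
P_k=\prod_{m=0}^{k-1} h^m\,(X_1\overline{X_2})\,h^{-m}=X_1\overline{X_2}\,X_2\overline{X_3}\cdots X_k\overline{X_{k+1}}=X_1\,\overline{X_{k+1}},
\end{align*}
where all interior factors cancel because $\overline{X_i}\,X_i=1$. Each $P_k$ lies in the topologically generated group. As $k\to\infty$ the curve $x_{k+1}$ is pushed out the attracting end and eventually leaves every compact subset of $S(n)$; since $\overline{X_{k+1}}$ is supported in an annular neighborhood of $x_{k+1}$, it tends to the identity uniformly on compacta, i.e.\ in the compact-open topology. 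Hence $P_k\to X_1$, so $X_1$ lies in the topological closure. This is exactly the point where \emph{topological} (rather than algebraic) generation is essential: the single twist only appears as a limit. Applying the argument to each family yields $A_1^2$, $B_1^2$, $C_1^2$, and conjugating these by $h^m$ for $m\ge0$ (together with one application of $h^{-1}$ to reach the base curve $c_0^2$) produces $A_i^2$, $B_i^2$ for all $i\ge1$ and $C_{i-1}^2$ for all $i\ge1$.

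Finally I would spread the twists across the ends. The conjugation action of $\rho_1,\rho_2$ permutes the $n$ ends together with their curve systems, and the subgroup they generate acts transitively on $\{1,\dots,n\}$; thus conjugating a twist about a curve in end $2$ by a suitable word in $\rho_1,\rho_2$ carries it to the twist about the corresponding curve in end $j$. Ranging over $j=1,\dots,n$ then delivers $A_i^j$, $B_i^j$, $C_{i-1}^j$ for every $j$ and every $i\ge1$, which is the assertion.

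The hard part will be the rigorous justification of the limit: one must verify that the $P_k$ genuinely converge in the compact-open topology, which rests on confirming that iterating the handle shift drives $x_{k+1}$ out the attracting end so that its twist is eventually trivial on any fixed compact set. A secondary point needing care is the behavior near the central region — extracting $C_0^2$ and checking that $h$ (equivalently $h^{-1}$) shifts the indices cleanly at the base of the chain rather than only far out in the end.
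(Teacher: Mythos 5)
Your proof is correct, but it takes a genuinely different route from the one the paper relies on. The paper does not reprove this lemma at all: it quotes Lemma~2.3 of \cite{apyinvolution}, and the flavor of that argument can be read off from the paper's own proofs of Lemma~\ref{lem:Jacob} and Lemma~\ref{lem:a1a2}, which are stated to follow the same steps. There, single Dehn twists are extracted from the difference elements $X_1\overline{X_2}$ purely \emph{algebraically}: braid-relation conjugations produce mixed differences such as $B_1\overline{A_3}$ and $D_i\overline{A_1}$, and then an embedded lantern relation ($A_1C_1C_2A_3=A_2D_1D_2$) converts a product of differences into one honest twist, after which the handle shift and rotations spread the twists everywhere; no limits are needed for that step. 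You instead exploit the topology directly: the telescoping products $P_k=X_1\overline{X_{k+1}}$ lie in the algebraic subgroup, and $\overline{X_{k+1}}\to\mathrm{id}$ in the compact-open (equivalently, permutation) topology because the curves $x_{k+1}=h^k(x_1)$ eventually miss every compact set, so $P_k\to X_1$ and $X_1$ lies in the closure. Since the lemma only asserts containment in the \emph{topologically} generated group, and $\mod(S(n))$ is a topological group, this is legitimate and your justification of the limit is essentially complete. What the lantern approach buys is that the twists land in the subgroup generated algebraically, before taking closures, and it does not depend on the Polish topology at all; what your approach buys is brevity and robustness, since no lantern configuration must be located and the argument applies verbatim to any twist-difference along a shift orbit. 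One detail should be adjusted: in the paper's curve system the handle shift satisfies $h_{1,2}(c_0^1)=c_1^2$, hence $h_{1,2}^{-1}(c_1^2)=c_0^1$ rather than $c_0^2$; so your final step produces $C_0^1$, not $C_0^2$, and you should then invoke the rotation step to reach every $C_0^j$ --- which your transport argument already does, so the base-of-the-chain issue you flagged closes immediately.
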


In both of these lemmas, $\rho_1$ and $\rho_2$ are rotations of $\dfrac{2\pi}{n}$ radians, as depicted in Figure \ref{fig:inf}. Note also that $R=\rho_1\rho_2$. We replace $\rho_1,\rho_2$ by $R$ in the previous lemmas to obtain the following:

\begin{lemma}\label{lemm2.2R.apy}
    For $n \geq 3$, the group generated by the elements
\begin{align*}
    \{R, A_1^1\overline{A_1^2}, B_1^1\overline{B_1^2}, C_0^1\overline{C_0^2}, h_{1,2}\}
\end{align*}
contains the Dehn twists $A_1^2\overline{A_2^2}$, $B_1^2\overline{B_2^2}$, $C_1^2\overline{C_2^2}$.
\end{lemma}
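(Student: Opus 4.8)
The plan is to re-run the proof of Lemma~\ref{lemm2.2apy} with the single element $R=\rho_1\rho_2$ playing the role previously played by the pair $\rho_1,\rho_2$. Since the rotations enter that argument only as conjugating elements, the first step is to record the action of $R$ on the standard curve system of Figure~\ref{fig:inf}. As each $\rho_k$ is a $\tfrac{2\pi}{n}$ rotation that cyclically permutes the $n$ ends while respecting the genus levels, $R$ sends each curve $x_i^j$ (for $x\in\{a,b,c\}$) to $x_i^{j+s}$, the upper index read modulo $n$ and $s$ a fixed shift determined by $\rho_1$ and $\rho_2$. Hence conjugation by $R$ acts on our difference generators purely by shifting the end index, for instance $R\,(A_1^1\overline{A_1^2})\,\overline{R}=A_1^{1+s}\overline{A_1^{2+s}}$, and for $n\ge 3$ the subgroup $\langle R\rangle$ moves the ends through an orbit meeting every index that the construction requires.

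With this in hand I would reproduce the construction of Lemma~\ref{lemm2.2apy} using only two operations: conjugation by $R$, which permutes the upper (end) index of a twist, and conjugation by the handle shift $h_{1,2}$, whose repelling and attracting ends are $1$ and $2$ and which advances the lower (genus) index along the chain joining those ends. The first operation lets us move the given end-difference $A_1^1\overline{A_1^2}$ to neighbouring ends and combine the resulting conjugates, using the fact that inverse twists about a common curve cancel, to assemble the end-difference configurations needed; the second converts such a difference across two ends into the difference across two genus levels $A_1^2\overline{A_2^2}$. Applying the identical sequence of conjugations and products to the $b$- and $c$-curves then yields $B_1^2\overline{B_2^2}$ and $C_1^2\overline{C_2^2}$, which is precisely the bookkeeping of the original lemma carried out with $R$ in place of $\rho_1,\rho_2$.

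The substantive point, and the only place the proof is not purely mechanical, is to verify that collapsing $\{\rho_1,\rho_2\}$ to their product $R$ does not invalidate the original argument: one must check that no step genuinely needed the two rotations separately and that the curves produced along the $\langle R\rangle$-orbit remain pairwise disjoint, so that the twist cancellations used above are legitimate. For $n\ge 3$ there is enough separation among the ends for this to hold, which is why the substitution is harmless here; the genuine breakdown of disjoint support under conjugation by rotations only appears in the more delicate three-generator construction of Theorem~\ref{thm:first}, as flagged in the Remark. I therefore expect checking disjointness of the relevant $R$-orbit, and hence the validity of the substitution, to be the sole nonroutine step.
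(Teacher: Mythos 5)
Your proposal never actually carries out the construction it promises; it is a plan (``I would reproduce the construction of Lemma~\ref{lemm2.2apy}\dots'') whose hard steps are exactly the ones left unexamined, and the mechanism you describe is not sufficient to produce the target elements. You assert that only two kinds of moves are needed --- end-index shifts by conjugation with $R$, and cancellation of inverse twists about a common curve, with $h_{1,2}$ ``converting'' an end-difference into a genus-difference --- and that ``the identical sequence of conjugations and products'' applied to the $a$-, $b$- and $c$-curves yields all three elements. This is false: the three families behave differently under $h_{1,2}$, and the paper's proof treats them by three genuinely different derivations. For the $b$-curves the statement is immediate, since $h_{1,2}$ carries $b_1^1\mapsto b_1^2\mapsto b_2^2$, so one conjugation gives $B_1^2\overline{B_2^2}$. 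But $h_{1,2}$ does \emph{not} carry $a_1^1$ to $a_1^2$, so your ``conversion'' step fails for the $a$-curves; the paper instead first forms $(A_1^1\overline{A_1^2})^R=A_1^2\overline{A_1^3}$, then uses the \emph{braid relation} (the curves $a_1^2$ and $b_1^2$ intersect once) to conjugate $B_1^1\overline{B_1^2}$ into $B_1^1\overline{A_1^2}$, pushes that through $h_{1,2}$ to get $B_1^2\overline{A_2^2}$, and only then reassembles $A_1^2\overline{A_2^2}$ as a product. Your toolkit contains no step involving intersecting curves at all, so this element is out of reach of the moves you describe. Similarly, $h_{1,2}(c_0^1)=c_1^2$ (it jumps a genus level rather than landing on $c_0^2$), which is why the $C$-case needs the detour through $C_0^1\overline{C_0^3}$ (built with $R$), a conjugation, a cancellation, and a second $h_{1,2}$-conjugation.

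Your diagnosis of the ``sole nonroutine step'' is also misplaced. The risk in replacing $\{\rho_1,\rho_2\}$ by $R$ is not disjointness along the $\langle R\rangle$-orbit; it is that the original proof of Lemma~\ref{lemm2.2apy} may use $\rho_1$ and $\rho_2$ \emph{individually} as conjugators, in ways the single element $R=\rho_1\rho_2$ cannot replicate (their individual actions on curves differ from that of $R$). You flag this worry and then dismiss it by assertion (``there is enough separation among the ends''), which is precisely the point that would need proof if one argued by re-running the cited lemma. The paper sidesteps the issue entirely by writing a fresh, self-contained chain of explicit conjugations and products using only $R$, $h_{1,2}$, the given twist-differences, commutation of twists about disjoint curves, and the braid relation. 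To repair your proposal you would need to either exhibit such an explicit derivation (in particular supplying the braid-relation step for the $a$-curves and the corrected route for the $c$-curves), or verify line by line that the proof in \cite{apyinvolution} invokes $\rho_1,\rho_2$ only through their product.
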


\begin{proof}
    Let $G$ be the subgroup generated by the set above.
    
    \textit{Derivation of $B_1^2\overline{B_2^2}$:}
    Observe that the images of the curves $b^1_1$ and $b^2_1$ under the handle shift $h_{1,2}$ are the curves $b^2_1$ and $b^2_2$ respectively. Hence, conjugating $B_1^1\overline{B_1^2}$ by $h_{1,2}$ yields
    \begin{align*}
        &(B_1^1\overline{B_1^2})^{h_{1,2}}=h_{1,2}(B_1^1\overline{B_1^2})\overline{h_{1,2}}=B_1^2\overline{B_2^2} \in G.
    \end{align*}
   
    \textit{Derivation of $A_1^2\overline{A_2^2}$:}
    By conjugating $A_1^1\overline{A_1^2}$ by $R$, we get 
    \begin{align*}
        &(A_1^1\overline{A_1^2})^R = A_1^2\overline{A_1^3} \in G.
    \end{align*}
    Since $b_1^2$ and $a_1^2$ intersect once, applying the braid relation yields  
    \begin{align*}
        (B_1^1\overline{B_1^2})^{B_1^1\overline{B_1^2}\overline{A_1^2\overline{A_1^3}}}=B_1^1\overline{A_1^2} \in G.
    \end{align*}
    Therefore, $B_1^1\overline{A_1^2}$ is in $G$.
    
    It follows that
    \begin{align*}
        &(B_1^1\overline{A_1^2})^{h_{1,2}} = B_1^2\overline{A_2^2} \in G.
    \end{align*}
    Combining these elements yield
    \begin{align*}
        &(A_1^2\overline{B_1^1})(B_1^1\overline{B_1^2})(B_1^2\overline{A_2^2})=A_1^2\overline{A_2^2} \in G.
    \end{align*}

    \textit{Derivation of $C_1^2\overline{C_2^2}$:}
    Conjugation of $C_0^1\overline{C_0^2}$ by $R$ gives 
    \begin{align*}
        &(C_0^1\overline{C_0^2})^R = C_0^2\overline{C_0^3} \in G,\\
        &(C_0^1\overline{C_0^2})(C_0^2\overline{C_0^3}) = C_0^1\overline{C_0^3} \in G.
    \end{align*}
    Conjugating $C_0^1\overline{C_0^3}$ by $h_{1,2}$ gives,
    \begin{align*}
        &(C_0^1\overline{C_0^3})^{h_{1,2}} = C_1^2\overline{C_0^3} \in G.
    \end{align*}

    The element $C_1^2\overline{C_2^2}$ is obtained through use of $h_{1,2}$ and $C_0^1\overline{C_0^3}$:
    \begin{align*}
        &(C_0^1\overline{C_0^3})\overline{(C_1^2\overline{C_0^3})}=(C_0^1\overline{C_0^3})(C_0^3\overline{C_1^2})=C_0^1\overline{C_1^2} \in G,\\
        &(C_0^1\overline{C_1^2})^{h_{1,2}}=C_1^2\overline{C_2^2} \in G.
    \end{align*}
\end{proof}

\begin{lemma}\label{lemm2.3Rapy}
For $n \geq 3$, the group generated by the elements
\begin{align*}
    \{R, A_1^2\overline{A_2^2}, B_1^2\overline{B_2^2},C_1^2\overline{C_2^2}, h_{1,2}\}
\end{align*}
contains the Dehn twists $A_i^j$, $B_i^j$, $C_{i-1}^j$ for all $1 \leq  j\leq n$ and for all $i \geq 1$.
\end{lemma}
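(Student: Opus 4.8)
The plan is to mirror the derivation of the original Lemma~\ref{lemm2.3apy}, but to rely on the single rotation $R$ in place of $\rho_1,\rho_2$. The two structural facts I would use are already recorded implicitly in the proof of Lemma~\ref{lemm2.2R.apy}: the rotation $R$ acts on the system of curves as the cyclic rotation of the ends, sending $x_i^j\mapsto x_i^{j+1}$ (the end index read modulo $n$) while preserving the chain position $i$, as witnessed by computations such as $(A_1^1\overline{A_1^2})^R=A_1^2\overline{A_1^3}$; and the handle shift $h_{1,2}$ advances the end-$2$ chain, sending $x_i^2\mapsto x_{i+1}^2$. Since $R$ alone cyclically permutes all $n$ ends, I would never need $\rho_1$ and $\rho_2$ individually. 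The three families $A,B,C$ are handled identically, so I would run the argument for $A$ and only indicate the index bookkeeping (in particular the shift $C_{i-1}$) for $B$ and $C$. Throughout, write $G$ for the closed subgroup topologically generated by the five given elements.

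First I would propagate the single difference twist along the chain. Conjugating $A_1^2\overline{A_2^2}$ by $h_{1,2}$ gives $(A_1^2\overline{A_2^2})^{h_{1,2}}=A_2^2\overline{A_3^2}$, and iterating, $(A_1^2\overline{A_2^2})^{h_{1,2}^{k}}=A_{k+1}^2\overline{A_{k+2}^2}\in G$ for every $k\ge 0$. Thus every consecutive difference $g_i:=A_i^2\overline{A_{i+1}^2}$ lies in $G$. The key step is then to extract a single twist by a telescoping infinite product: forming the partial products and cancelling adjacent factors yields
\begin{align*}
    g_1g_2\cdots g_N=(A_1^2\overline{A_2^2})(A_2^2\overline{A_3^2})\cdots(A_N^2\overline{A_{N+1}^2})=A_1^2\overline{A_{N+1}^2},
\end{align*}
where each cancellation $\overline{A_{i+1}^2}\,A_{i+1}^2=\mathrm{id}$ is purely algebraic and needs no disjointness of the curves. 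As $N\to\infty$ the curve $a_{N+1}^2$ eventually leaves every compact subset of $S(n)$, so $A_{N+1}^2\to\mathrm{id}$ in the compact-open topology of $\mod(S(n))$; hence $A_1^2\overline{A_{N+1}^2}\to A_1^2$. Because $G$ is closed, this limit lies in $G$, and so $A_1^2\in G$.

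Finally I would spread the isolated twist to every position. Conjugating by powers of the handle shift recovers the whole end-$2$ chain, $(A_1^2)^{h_{1,2}^{i-1}}=A_i^2\in G$ for all $i\ge 1$, and then conjugating by powers of the rotation distributes these over all ends, $(A_i^2)^{R^{j-2}}=A_i^j\in G$ for all $j=1,\dots,n$. Running the same three steps for $B_1^2\overline{B_2^2}$ and $C_1^2\overline{C_2^2}$ produces all $B_i^j$ and $C_{i-1}^j$; here the telescoping yields $C_1^2$, and one extra conjugation $(C_1^2)^{h_{1,2}^{-1}}=C_0^2$ supplies the bottom of the range, accounting for the $C_{i-1}$ indexing. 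This exhausts the list of Dehn twists claimed in the statement.

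The main obstacle is the convergence argument in the middle step, which is precisely where topological rather than algebraic generation is essential. I would need to justify cleanly that Dehn twists about curves escaping to infinity converge to the identity in the compact-open topology, and that the topologically generated subgroup $G$ is closed, so that the telescoping limit $A_1^2$ is genuinely captured in $G$. By contrast, the algebraic telescoping and the spreading by $R$ and $h_{1,2}$ are routine index manipulations; the analytic input about limits in $\mod(S(n))$ is the delicate point of the proof.
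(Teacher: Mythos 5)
Your proof is correct, but it takes a genuinely different route from the paper's. The paper disposes of this lemma by citation --- ``the proof is the same as that of Lemma~\ref{lemm2.3apy} in \cite{apyinvolution}'' --- and that argument (whose structure is reproduced in this paper's Lemma~\ref{lem:Jacob} for the Jacob's Ladder surface) is purely algebraic: single Dehn twists are extracted from the difference elements via braid relations together with an embedded lantern relation, e.g. $A_1C_1C_2A_3=A_2D_1D_2$, which expresses one twist as a \emph{finite} product of differences already known to lie in the subgroup. Your telescoping argument replaces all of that with topology: $g_1\cdots g_N=A_1^2\overline{A_{N+1}^2}\to A_1^2$ because twists about curves escaping every compact set converge to the identity, and closedness of $G$ captures the limit. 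This is appreciably simpler --- each of the three families $A$, $B$, $C$ is handled independently, with no use of intersection patterns, braid relations, or lanterns. What the algebraic route buys is the literal statement: the lemma says ``the group generated by,'' and the paper's cited proof places the twists in the abstract subgroup, whereas yours places them only in its closure. That weaker conclusion does suffice for every application in the paper (in Theorems~\ref{thm:first} and~\ref{thm:second} the ambient group is topologically generated, hence closed), but you should say explicitly that you are proving the topological version of the statement.

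Two smaller points. The convergence fact you flag as the delicate step is standard and can be justified in one line: identity neighborhoods in $\mod(S(n))$ are given by stabilizers of finite sets of isotopy classes of simple closed curves, and $A_{N+1}^2$ eventually fixes any such set because $a_{N+1}^2$ leaves every compact subsurface; closedness of $G$ is immediate from the definition of topological generation, so neither point is actually problematic. Finally, an index slip: by the paper's description of the handle shift ($h_{1,2}$ sends $c_0^1\mapsto c_1^2$), your conjugate $(C_1^2)^{\overline{h_{1,2}}}$ equals $C_0^1$, not $C_0^2$; this is harmless, since conjugating $C_0^1$ by powers of $R$ then produces all the $C_0^j$.
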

\begin{proof}
    The proof is the same as that of Lemma~\ref{lemm2.3apy} in~\cite{apyinvolution}.
\end{proof}

Using these intermediate results, we prove the following theorem, which will aid us to prove that our generating sets indeed topologically generate $\mod(S(n))$.

\begin{theorem}\label{thm:23}
    For $n \geq 3$, the group generated by the elements 
   \begin{align*}
    \{R, A_1^1\overline{A_1^2}, B_1^1\overline{B_1^2}, C_0^1\overline{C_0^2}, h_{1,2}\}
    \end{align*}
contains the Dehn twists $A_i^j$, $B_i^j$, $C_{i-1}^j$ for all $j=1,2, \ldots, n$ and for all $i=1,2,3, \ldots$.
\end{theorem}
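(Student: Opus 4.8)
The plan is to combine the two preceding lemmas directly, using Theorem~\ref{thm:23} as a straightforward chaining of Lemma~\ref{lemm2.2R.apy} and Lemma~\ref{lemm2.3Rapy}. Let $G$ be the group topologically generated by the set $\{R, A_1^1\overline{A_1^2}, B_1^1\overline{B_1^2}, C_0^1\overline{C_0^2}, h_{1,2}\}$. By Lemma~\ref{lemm2.2R.apy}, the group $G$ contains the three Dehn twists $A_1^2\overline{A_2^2}$, $B_1^2\overline{B_2^2}$, and $C_1^2\overline{C_2^2}$. Since $G$ also contains $R$ and $h_{1,2}$ by construction, the set
\[
\{R, A_1^2\overline{A_2^2}, B_1^2\overline{B_2^2}, C_1^2\overline{C_2^2}, h_{1,2}\}
\]
lies entirely in $G$, and hence the subgroup they generate is contained in $G$.

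Next I would apply Lemma~\ref{lemm2.3Rapy} to that inner subgroup. That lemma asserts precisely that the group generated by $\{R, A_1^2\overline{A_2^2}, B_1^2\overline{B_2^2}, C_1^2\overline{C_2^2}, h_{1,2}\}$ contains all the Dehn twists $A_i^j$, $B_i^j$, $C_{i-1}^j$ for every $j = 1, 2, \ldots, n$ and every $i = 1, 2, 3, \ldots$. Since this subgroup is contained in $G$, all of these Dehn twists belong to $G$ as well, which is exactly the conclusion we want.

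The argument is essentially a transitivity statement: Lemma~\ref{lemm2.2R.apy} certifies that the intermediate generating set of Lemma~\ref{lemm2.3Rapy} is available inside $G$, and then Lemma~\ref{lemm2.3Rapy} promotes that set to the full family of twists. The only point requiring attention is that the generators $R$ and $h_{1,2}$ must be carried through both stages, since both lemmas list them among their hypotheses; but these are present in the original generating set by definition of $G$, so no extra work is needed. I expect no genuine obstacle here—the substance of the theorem resides entirely in the two lemmas, and this statement merely packages their composition into the form used later to verify the explicit generating sets in Theorem~\ref{thm:first} and Theorem~\ref{thm:second}.
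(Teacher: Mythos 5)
Your proposal is correct and is essentially identical to the paper's own proof: both chain Lemma~\ref{lemm2.2R.apy} (to obtain $A_1^2\overline{A_2^2}$, $B_1^2\overline{B_2^2}$, $C_1^2\overline{C_2^2}$ inside $G$) and then apply Lemma~\ref{lemm2.3Rapy} to conclude, noting that $R$ and $h_{1,2}$ are already in the generating set. Your write-up is, if anything, slightly more careful than the paper's in spelling out that the intermediate generating set lies inside $G$ before invoking the second lemma.
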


\begin{proof}
    Let $G$ be the subgroup generated by the above set. By Lemma~\ref{lemm2.2R.apy}, $G$ contains $A_1^2\overline{A_2^2}$, $B_1^2\overline{B_2^2}$, $C_1^2\overline{C_2^2}$, which implies by Lemma~\ref{lemm2.3Rapy}, that it also contains $A_i^j$, $B_i^j$, $C_{i-1}^j$.
\end{proof}

We also require the following modified form of Theorem~\ref{thm:23} to prove Theorem~\ref{thm:first}. This is because in its proof, we will not be able to isolate the handle shift $h_{1,2}$ from the Dehn twists. Instead, we need to generate the Dehn twists first using $h_{1,2}\overline{h_{5,6}}$, then isolate $h_{1,2}$ to generate the whole group.

\begin{theorem}\label{thmnew}
    For $n \geq 6$, the group generated by the elements
    \begin{align*}
    \{R, A_1^1\overline{A_1^2}, B_1^1\overline{B_1^2}, C_0^1\overline{C_0^2}, h_{1,2}\overline{h_{5,6}}\}
    \end{align*}
    contains the Dehn twist $A_i^j,B_i^j,C_{i-1}^j$ for all $j=1,2,\ldots, n$ and for all $i=1,2,3, \ldots$.
\end{theorem}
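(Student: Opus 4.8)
The plan is to observe that Theorem~\ref{thmnew} differs from Theorem~\ref{thm:23} only in that the handle shift $h_{1,2}$ is replaced by the product $h_{1,2}\overline{h_{5,6}}$, and then to argue that this replacement is invisible to every step of the earlier proof. Everything rests on one disjointness fact. For $n\geq 6$ the ends $5$ and $6$ exist, and the model strip carrying $h_{5,6}$ can be taken disjoint both from the strip carrying $h_{1,2}$ and from every curve $a_i^j,b_i^j,c_{i-1}^j$ with $j\in\{1,2,3\}$ and all $i$. Hence $h_{1,2}$ and $h_{5,6}$ commute, and for every $k\in\Z$ and every mapping class $X$ whose support is disjoint from that of $h_{5,6}$ one gets
\begin{align*}
(h_{1,2}\overline{h_{5,6}})^{k}\,X\,(h_{1,2}\overline{h_{5,6}})^{-k}=h_{1,2}^{k}\,X\,h_{1,2}^{-k},
\end{align*}
because the factors $h_{5,6}^{\mp k}$ slide past $X$. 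I would isolate this commutation identity as the key lemma: conjugation by $h_{1,2}\overline{h_{5,6}}$ agrees with conjugation by $h_{1,2}$ on every class supported away from ends $5$ and $6$.

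With this in hand, the plan is to rerun the two ingredient lemmas verbatim. First, in the derivation of Lemma~\ref{lemm2.2R.apy}, the handle shift is applied only to $B_1^1\overline{B_1^2}$, $B_1^1\overline{A_1^2}$, $C_0^1\overline{C_0^3}$ and $C_0^1\overline{C_1^2}$, each supported near ends in $\{1,2,3\}$ and so disjoint from the support of $h_{5,6}$. By the key lemma each of these conjugations returns exactly the element it did before, so $G$ (the group generated by our five elements) contains $A_1^2\overline{A_2^2}$, $B_1^2\overline{B_2^2}$ and $C_1^2\overline{C_2^2}$.

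Next I would feed these three difference twists into the argument of Lemma~\ref{lemm2.3Rapy}. There the handle shift serves only to manufacture the individual Dehn twists near the two ends $1$ and $2$---for instance as limits of the telescoping products $\prod_i A_i^2\overline{A_{i+1}^2}$, whose factors are conjugates of $A_1^2\overline{A_2^2}$ by powers of the shift and converge, in the compact-open topology, to $A_1^2$---after which $R$ transports all of these twists to the remaining $n-1$ ends. Every handle-shift conjugation here is again applied to a class supported near ends $1$ and $2$, so the key lemma lets me substitute $h_{1,2}\overline{h_{5,6}}$ for $h_{1,2}$ throughout without changing a single output. Thus $G$ contains $A_i^j$, $B_i^j$, $C_{i-1}^j$ for all $i$ and all $j=1,\dots,n$.

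The only step requiring genuine care, and the place I expect any real difficulty, is the support bookkeeping: I must confirm that not one application of the handle shift in the proofs of Lemmas~\ref{lemm2.2R.apy} and~\ref{lemm2.3Rapy} touches a curve near the ends $5$ or $6$. This is precisely where $n\geq 6$ enters, guaranteeing that these ends exist and that the $h_{5,6}$-strip can be isotoped off of ends $1,2,3$; once the supports are tracked the substitution is forced and no new relations are needed. A minor secondary point is that individual twists are produced as topological limits rather than finite words, so ``generation'' must be read in the topological sense; but the convergence of the telescoping products (their factors exit every compact set) is untouched by the substitution, so it creates no extra obstacle.
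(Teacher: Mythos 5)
Your proposal is correct and follows exactly the route the paper takes: the paper's own proof of Theorem~\ref{thmnew} is the one-line assertion that the arguments of Lemma~\ref{lemm2.2R.apy} and Lemma~\ref{lemm2.3Rapy} go through with $h_{1,2}\overline{h_{5,6}}$ in place of $h_{1,2}$. Your disjoint-support commutation lemma (conjugation by $h_{1,2}\overline{h_{5,6}}$ agrees with conjugation by $h_{1,2}$ on classes supported away from ends $5$ and $6$) is precisely the justification the paper leaves implicit, and your support bookkeeping for the four handle-shift conjugations matches the actual steps of those lemmas.
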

\begin{proof}
The proof follows exactly the same as those of Lemma~\ref{lemm2.2R.apy} and Lemma~\ref{lemm2.3Rapy}, with the exception that $h_{1,2}$ is replaced by $h_{1,2}\overline{h_{5,6}}$. Note that the addition of the $\overline{h_{5,6}}$ factor has no effect on any of the steps as the support of it is disjoint from those of the Dehn twists used in those proofs and the support of $h_{1,2}$.  
\end{proof}

We are ready to state and prove the first main result of this paper.

\begin{theorem}\label{thm:first}
For $n \geq 8$, $\mod(S(n))$ is topologically generated by three elements.
\end{theorem}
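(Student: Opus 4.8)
The plan is to produce three mapping classes $g_1,g_2,g_3$ and to show that the group $G$ they topologically generate is all of $\mod(S(n))$. By the exact sequence $1\to\pmod(S(n))\to\mod(S(n))\to\Sym_n\to 1$ together with the decomposition $\pmod(S(n))=\overline{\pmod_{\mathrm{c}}(S(n))}\rtimes\Z^{n-1}$, it suffices to verify three things: that the images of $g_1,g_2,g_3$ generate $\Sym_n$; that $G$ contains every Dehn twist $A_i^j$, $B_i^j$, $C_{i-1}^j$, so that it topologically generates $\overline{\pmod_{\mathrm{c}}(S(n))}$; and that $G$ contains $n-1$ handle shifts with pairwise distinct attracting and repelling ends. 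I would arrange the images of the three generators to include a suitable power of the rotation $R$ and a transposition, chosen so that together they generate $\Sym_n$; the first requirement is then immediate, and the work lies in the remaining two.

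For the Dehn twists the idea is to compress the five-element set of Theorem~\ref{thmnew} into a single pure generator $g_3=F_1$ by exploiting disjoint supports. Concretely, I would take $F_1$ to be a product of commuting pieces, one realizing each of $A_1^1\overline{A_1^2}$, $B_1^1\overline{B_1^2}$, $C_0^1\overline{C_0^2}$ and the handle-shift difference $h_{1,2}\overline{h_{5,6}}$, with the pieces supported near the pairwise disjoint ends indexed by $\{1,3,4,n-1,n\}$. Since the pieces commute, conjugating $F_1$ by powers of $R$ cyclically permutes their supports, and forming judicious products and quotients of these conjugates cancels all but one factor at a time. Having isolated a single factor such as $A_1^1\overline{A_1^2}$, and likewise the $B$-, $C$- and handle-shift differences, and recalling that $R$ is available among the generators, I can then invoke Theorem~\ref{thmnew} to conclude that $G$ contains all of the Dehn twists.

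With every Dehn twist now in $G$, the final step is to extract one genuine handle shift. The difference $h_{1,2}\overline{h_{5,6}}$ by itself carries nonzero flux at two independent pairs of ends, so it cannot be reduced to a single shift by Dehn twists alone; this is where the transposition generator is essential. Conjugating $h_{1,2}\overline{h_{5,6}}$ by it moves exactly one of the two ends involved, and comparing the conjugate with the original produces a handle-shift difference supported on a single pair of ends, that is, a single handle shift up to a compactly supported correction. Since the Dehn twists generate all of $\overline{\pmod_{\mathrm{c}}(S(n))}$, I can absorb this correction to obtain an honest handle shift $h_{1,2}\in G$; conjugating by the powers $R^k$ then yields handle shifts with pairwise distinct ends, which are the $n-1$ shifts demanded by the semidirect product decomposition. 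Together with the surjection onto $\Sym_n$, this gives $G=\mod(S(n))$.

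I expect the principal obstacle to be the support bookkeeping in the peeling-off step for the Dehn twists. Isolating each factor of $F_1$ requires that $F_1$ and all of the intermediate $R$-conjugates one multiplies together remain pairwise disjointly supported; this disjointness is exactly what forces $n\ge 8$, since for $3\le n\le 7$ the rotated index sets begin to overlap and the commuting-factor cancellations break down, as anticipated in the Remark. A second delicate point is the handle-shift extraction: one must check that conjugation by the transposition together with the Dehn twists really yields a single shift, rather than a product of shifts with nonzero combined flux, which is precisely why the argument is staged to generate all the Dehn twists before attempting this isolation.
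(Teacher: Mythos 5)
Your high-level skeleton matches the paper's: three generators consisting of a rotation $R$, a transposition $\tau$, and one pure element $F_1$ built from disjointly supported pieces, with the goal of recovering the generating set of Theorem~\ref{thmnew} and then the handle shifts. However, the core mechanism you propose for recovering that generating set has a genuine gap, in fact two. First, your $F_1$ is internally inconsistent: the pieces you name, $A_1^1\overline{A_1^2}$, $B_1^1\overline{B_1^2}$, $C_0^1\overline{C_0^2}$ and $h_{1,2}\overline{h_{5,6}}$, all have support meeting the ends $1$ and $2$ (and some of the underlying curves intersect), so they neither commute nor can they be ``supported near the pairwise disjoint ends $\{1,3,4,n-1,n\}$.'' Second, and more seriously, even after correcting $F_1$ to a genuine product of disjointly supported pieces (the paper takes $F_1=B_1^1C_0^3A_1^{4}h_{n-1,n}$, single twists and a single shift, not differences), the peeling mechanism you describe cannot work: conjugation by $R^k$ translates the supports of \emph{all} factors simultaneously, and a factor can only cancel against a factor of the same type about the same curve. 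Since the factors of $F_1$ are twists of different types at different ends, the only cancellations available among products and quotients of the conjugates $F_1^{R^k}$ are the total ones (a conjugate against its own inverse); no proper subproduct, let alone a single factor, can be isolated by rotations and multiplication alone. This is precisely the difficulty the paper's proof is organized around.

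What the paper actually does is conjugate $F_1$ by words in its own conjugates, e.g.\ $F_3=F_1^{F_1F_2}$ and $F_4=F_3^{F_3F_1}$ with $F_2=F_1^{R^2}$, exploiting the braid relation at curves that intersect once and the action of the handle-shift factor on curves. These conjugations \emph{alter some factors of $F_1$ while preserving others}, so that quotients such as $F_1\overline{F_4}=B_1^1\overline{B_1^2}$ and $F_3\overline{F_4}=B_1^3\overline{C_0^3}$ finally exhibit cancellation; from these the differences $A_1^1\overline{A_1^2}$, $C_0^1\overline{C_0^2}$ and $h_{1,2}\overline{h_{5,6}}$ are assembled, and Theorem~\ref{thmnew} applies. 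This braid-relation bookkeeping is the real content of the proof and is absent from your proposal. A minor further difference: your extraction of a single handle shift via $\tau$ and a flux argument is workable in spirit, but the paper does not need it, because its $F_1$ contains the single shift $h_{n-1,n}$ as a factor; once all Dehn twists are in $G$ it simply writes $h_{n-1,n}=(\overline{A_1^4}\,\overline{C_0^3}\,\overline{B_1^1})F_1$, and $\tau$ is used only to generate $\Sym_n$.
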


\begin{proof}

Let $G$ be the group topologically generated by the three elements $R$, $\tau$, and $F_1 = B_1^1C_0^3A_1^{4}h_{n-1,n}$, where $R$ is a counterclockwise rotation of $\frac{2\pi}{n}$ radians and $\tau$ is a homeomorphism of $S(n)$ that swaps two of the ends while fixing the others, see~\cite{apyinvolution,tez} for an example of such a homeomorphism. It follows that $R$ projects to the $n$-cycle $(1\,2\,\dots\,n)$ and $\tau$ projects to the $2$-cycle $(1\,2)$ inside the symmetric group $\Sym_n$. It is a known fact that the $n$-cycle $(1\,2\, \dots \,n)$ and the $2$-cycle $(1\, 2)$ generate $\Sym_n$ and thus, elements of $G$ cover $\Sym_n$. Our goal is to show that $G$ contains the Dehn twists $A_i^j,B_i^j,C_{i-1}^j$ for all $j=1,2,\ldots, n$ and for all $i=1,2,3, \ldots$ using Theorem~\ref{thmnew}, and the handle shift $h_{1,2}$, which implies by [\citenum{tez}, Proposition 6.1.10] that it contains $\overline{\pmod_\mathrm{c}(S(n))}$.

\begin{figure}[H]
      \centering
      \includegraphics[width=0.60\textwidth]{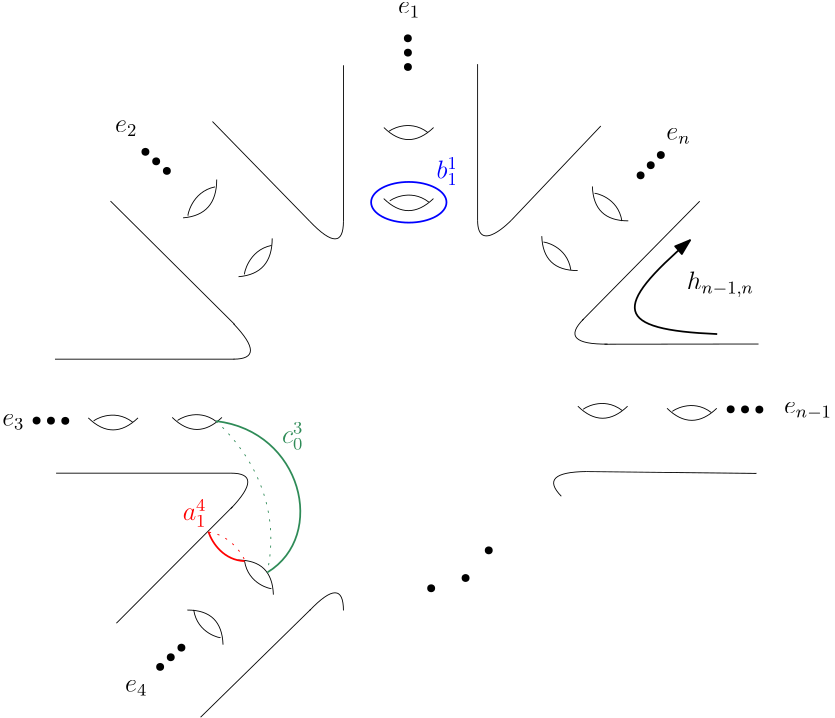}
      \caption{The curves corresponding to the Dehn twist factors of $F_1=B_1^1C_0^3A_1^{4}h_{n-1, n}$.}
      \label{fig:n8}
\end{figure}

To simplify notation, we use the simplified notation described in the introduction, where the upper (end) index of a Dehn twist is written as a subscript, while the lower (genus) index is assumed to be 1 (or 0 for $C$ curves). Also, we simplify the notation for handle shifts where $h_{n-1, n}$ is denoted $h_{n-1}$. With this notation, our second generator is:
\begin{align*}
    F_1 = B_1C_3A_4h_{n-1}.
\end{align*}
First, we generate a second element, $F_2$, by conjugating $F_1$ by $R^2$. Since $R$ increments the end index by one, $R^2$ increments it by two:
\begin{align*}
    F_2 = F_1^{R^2} = R^2 F_1 \overline{R}^2 = B_3 C_5 A_6 h_1 \in G.
\end{align*}

Next, we generate new elements through a series of conjugations: 
\begin{align*}
    F_3 &= F_1^{F_1F_2} = B_2 B_3 A_4 h_{n-1}, \\
    F_4 &= F_3^{F_3F_1} = B_2 C_3 A_4 h_{n-1}.
\end{align*}
Taking the product of $F_3$ and $\overline{F_4}$ cancels common terms:
\begin{align*}
    F_3\overline{F_4} = (B_2 B_3 A_4 h_{n-1}) (\overline{h_{n-1}}\mkern3mu\overline{A_4}\mkern3mu\overline{C_3}\mkern3mu\overline{B_2}) = B_2 B_3 \overline{C_3}\mkern3mu\overline{B_2} = B_3\overline{C_3} \in G,
\end{align*}
where the final step uses the fact that mapping classes on different ends commute. Similarly:
\begin{align*}
    F_1\overline{F_4} = (B_1 C_3 A_4 h_{n-1}) (\overline{h_{n-1}}\mkern3mu\overline{A_4}\mkern3mu \overline{C_3}\mkern3mu\overline{B_2}) = B_1 C_3 \overline{C_3}\mkern3mu\overline{B_2} = B_1\overline{B_2} \in G.
\end{align*}
From these elements, we can generate a family of similar elements using the rotation $R$:
\begin{align*}
    (B_1\overline{B_2})^R &= B_2\overline{B_3} \in G,\\
    (B_1\overline{B_2})^{R^2} &= B_3\overline{B_4}\in G,\\
    (B_3\overline{C_3})^R &= B_4\overline{C_4} \in G.
\end{align*}
By combining these, we obtain products of Dehn twists on non-adjacent curves:
\begin{align*}
    (C_3\overline{B_3})(B_3\overline{B_4})(B_4\overline{C_4})&=C_3\overline{C_4} \in G,\\
    (C_3\overline{C_4})^{\overline{R}^2}&=C_1\overline{C_2} \in G.
\end{align*}
Also, since $B_1\overline{B_2} \in G$ and $B_2\overline{B_3} \in G$, their product $B_1\overline{C_3} = (B_1\overline{B_2})(B_2\overline{B_3})(B_3\overline{C_3})$ is in $G$. We use this to generate $A_1\overline{A_2}$. Define:
\begin{align*}
    F_5 = (B_1\overline{C_3})F_1 = (B_1\overline{C_3})(B_1C_3A_4h_{n-1}) = B_1^2 A_4 h_{n-1}\in G.
\end{align*}
Then we rotate this element and its inverse:
\begin{align*}
    F_5^{\overline{R}^3} &= B_{n-2}^2 A_1 h_{n-4}, \\
    \overline{F_5}^{\overline{R}^2} &= \overline{h_{n-3}}\mkern3mu\overline{A_2}\mkern3mu\overline{B_{n-1}}^2\in G.
\end{align*}
The following conjugations  yield:
\begin{align*}
    (B_{1}\overline{B_{2}})^{(B_{1}\overline{B_{2}})F_5^{\overline{R}^3}} &= A_{1}\overline{B_{2}} \in G, \\
    (A_{1}\overline{B_{2}})^{(A_{1}\overline{B_{2}})\overline{F_5}^{\overline{R}^2}} &= A_{1}\overline{A_{2}} \in G.
\end{align*}
Finally, we isolate a composite handle shift. Define:
\begin{align*}
    F_6 &= F_1^{R^4} = B_5 C_7 A_8 h_3, \in G, \\
    F_7 &= F_1\overline{F_6} = (B_1C_3A_4h_{n-1})(\overline{h_3}\mkern3mu\overline{A_8}\mkern3mu\overline{C_7}\mkern3mu\overline{B_5}) = (B_1\overline{B_5})(C_3\overline{C_7})(A_4\overline{A_8})(h_{n-1}\overline{h_3})\in G. \\
\end{align*}
Observe that 
\begin{align*}
    & (B_1\overline{B_2})(B_2\overline{B_3})(B_3\overline{B_4})(B_4\overline{B_5}) = B_1\overline{B_5} \in G, \\
    & (C_3\overline{C_4})(C_4\overline{C_5})(C_5\overline{C_6})(C_6\overline{C_7}) = C_3\overline{C_7} \in G, \\
    & (A_4\overline{A_5})(A_5\overline{A_6})(A_6\overline{A_7})(A_7\overline{A_8}) = A_4\overline{A_8} \in G.
\end{align*}
Multiplying $F_7$ by the inverses of these, we isolate $h_{n-1}\overline{h_3} \in G$. Conjugating this by $R^2$ gives:
\begin{align*}
    (h_{n-1}\overline{h_3})^{R^2} = h_1\overline{h_5} \in G.
\end{align*}
We have shown that $G$ contains $R$, $A_1\overline{A_2}$, $B_1\overline{B_2}$, $C_1\overline{C_2}$, and $h_1\overline{h_5}$. By Theorem~\ref{thmnew}, the subgroup generated by them contain all Dehn twists $A_i^j, B_i^j, C_{i-1}^j$ and thus contains $\overline{\pmod_\mathrm{c}(S(n))}$. With all individual Dehn twists now in $G$, we can return to the definition of $F_1$:
\begin{align*}
    h_{n-1} = (\overline{A_4}\mkern3mu\overline{C_3}\mkern3mu\overline{B_1}) F_1 \in G.
\end{align*}
Since $h_{n-1}$ and $R$ are in $G$, all handle shifts $h_{j,j+1}$ can be generated. It follows that $G$ is $\mod(S(n))$.
\end{proof}

\begin{theorem}\label{thm:second}
For $n \geq 3$, $\mod(S(n))$ is topologically generated by four elements.
\end{theorem}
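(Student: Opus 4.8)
The plan is to follow the architecture of the proof of Theorem~\ref{thm:first}, but to decouple the handle shift from the composite Dehn-twist generator; this is exactly what the extra (fourth) generator buys us. Concretely, I would take the four generators to be $R$, a transposition $\tau$, the handle shift $h_{1,2}$, and a single product of Dehn twists supported on a small block of consecutive ends, such as $F = B_1^1 C_0^2 A_1^3$. Writing $G$ for the group they topologically generate, $R$ acts as an $n$-cycle and $\tau$ as a transposition on $\Ends(S(n))$, so $R$ and $\tau$ already surject onto $\Sym_n$. In view of the exact sequence $1 \to \pmod(S(n)) \to \mod(S(n)) \to \Sym_n \to 1$, it then suffices to show $\pmod(S(n)) \subseteq G$.

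The heart of the argument is to recover the three difference elements $A_1^1\overline{A_1^2}$, $B_1^1\overline{B_1^2}$, and $C_0^1\overline{C_0^2}$ required by Theorem~\ref{thm:23} from $F$. I would proceed exactly as in the isolation steps of Theorem~\ref{thm:first} and Lemma~\ref{lemm2.2R.apy}: conjugate $F$ by powers of $R$ to produce shifted products $F^{R^k}$, multiply $F$ by suitable inverses of these to cancel the factors supported away from a chosen end, and use the braid relation between intersecting curves (e.g. $b_1^2$ and $a_1^2$) to convert one twist type into another, just as was done to pass through $B_1^1\overline{A_1^2}$ in Lemma~\ref{lemm2.2R.apy}. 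The essential gain over Theorem~\ref{thm:first} is that $F$ now has support on only three ends rather than the five ends $\{1,3,4,n-1,n\}$, and that $h_{1,2}$ is available as an independent generator for moving between end-$1$ and end-$2$ curves; consequently these cancellations only ever involve a bounded number of consecutive ends.

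Once $A_1^1\overline{A_1^2}$, $B_1^1\overline{B_1^2}$, $C_0^1\overline{C_0^2}$, together with $R$ and $h_{1,2}$, are shown to lie in $G$, Theorem~\ref{thm:23} (rather than the composite-handle-shift variant Theorem~\ref{thmnew}, which is no longer needed) immediately yields every Dehn twist $A_i^j, B_i^j, C_{i-1}^j$, and hence $\overline{\pmod_\mathrm{c}(S(n))} \subseteq G$ by \cite[Proposition~6.1.10]{tez}. Conjugating $h_{1,2}$ by the powers of $R$ produces all handle shifts $h_{j,j+1}$, which supply the $\Z^{n-1}$ factor in the splitting $\pmod(S(n)) = \overline{\pmod_\mathrm{c}(S(n))} \rtimes \Z^{n-1}$ of \cite[Corollary~6]{arapatevla}; thus $\pmod(S(n)) \subseteq G$. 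Combined with the surjection onto $\Sym_n$ from the first paragraph, this gives $G = \mod(S(n))$.

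The main obstacle is precisely the isolation step of the second paragraph for the smallest cases $n = 3, 4$, where the $R$-conjugates of $F$ wrap around and overlap its original support, so that the block cancellations that were transparent in Theorem~\ref{thm:first} (where the relevant ends were pairwise disjoint) must be reorganized. Splitting off $h_{1,2}$ as a fourth generator is exactly what shrinks the support of the composite Dehn-twist generator from five ends to three, and performing the type-converting braid moves one end at a time then keeps every intermediate product disjointly supported. Verifying that this reorganization goes through uniformly, rather than only for $n$ large, is the delicate point and the step I expect to require the most care.
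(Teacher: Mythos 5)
Your overall architecture coincides with the paper's: the paper also takes the four generators to be $R$, $\tau$, $h_{1,2}$, and a single product of three Dehn twists of types $A$, $B$, $C$ on three distinct ends (there $F_1 = A_1^{n-1}B_1^nC_0^{n-2}$), also aims at producing $A_1^1\overline{A_1^2}$, $B_1^1\overline{B_1^2}$, $C_0^1\overline{C_0^2}$, and also finishes via Theorem~\ref{thm:23}, the $R$-conjugates of $h_{1,2}$, and the $\Sym_n$ surjection. The problem is that the one step you leave as a sketch is the entire content of the theorem, and the method you sketch for it is the one that provably does not work in the range this theorem is about. You propose to ``conjugate $F$ by powers of $R$'' and cancel, ``exactly as in the isolation steps of Theorem~\ref{thm:first}''; but the paper's Remark in the introduction states precisely that for $3 \le n \le 7$ the intermediate elements obtained by conjugation by rotations fail to stay disjointly supported, which is why Theorem~\ref{thm:first} needs $n \ge 8$. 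Concretely, with your $F = B_1^1C_0^2A_1^3$ and $n=3$, the rotate $F^R = B_1^2C_0^3A_1^1$ meets $F$ in \emph{two} intersecting pairs at once ($b_1^1$ with $a_1^1$, and $c_0^2$ with $b_1^2$), so a conjugation such as $F^{FF^R}$ alters two factors simultaneously and the clean ``cancel common terms on disjoint ends'' computations of Theorem~\ref{thm:first} (e.g.\ $F_3\overline{F_4} = B_3\overline{C_3}$) no longer isolate anything. Your claim that performing braid moves ``one end at a time keeps every intermediate product disjointly supported'' is exactly the assertion that needs proof, and your closing paragraph concedes it is unverified for $n = 3,4$.

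What the paper actually does to close this gap is the idea missing from your proposal: it conjugates $F_1$ not by a power of $R$ but by the mixed element $h_{1,2}R^2$, obtaining $F_2 = {A'}_1^2B_2^2C_0^n$. The handle shift inside the conjugation pushes the curves to \emph{different genus levels} (a genus-$2$ curve $b_2^2$ and a primed curve ${a'}_1^2$), so $F_2$ is disjoint from $F_1$ except for the single braid pair $c_0^n$, $b_1^n$ --- even when $n = 3$ and the ends heavily overlap. From there one twist type is converted at a time ($F_3 = F_1^{F_1F_2}$, then $F_1\overline{F_3} = B_1^n\overline{C_0^n}$, etc.), with further conjugations by $\overline{R}h_{n,1}$ again interleaving handle shifts with rotations. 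In your proposal $h_{1,2}$ is used only as a passive ``mover between end-$1$ and end-$2$ curves'' and never enters a conjugation to shift genus indices; without that device, the small-$n$ overlap problem you flag at the end is not a delicate verification but an obstruction, and the proof as outlined does not go through for the cases $3 \le n \le 7$ that distinguish this theorem from Theorem~\ref{thm:first}.
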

\begin{proof}
    Let $G$ be the subgroup topologically generated by $\{R, \tau, F_1, h_{1,2}\}$, where
    \begin{align*}
        F_1=A_1^{n-1}B_1^nC_0^{n-2},
    \end{align*}
    and $\tau$ is as the previous theorem.
    Conjugating $F_1$ by $h_{1,2}R^2$, we get
    \begin{align*}
        &F_2 = F_1^{h_{1,2}R^2}= h_{1,2}R^2F_1\overline{R^2}\mkern3mu\overline{h_{1,2}}={A'}_1^2B_2^2C_0^n.
    \end{align*}
    Also, conjugating $F_1$ by $F_1F_2$ yields
    \begin{align*}
        &F_3 = F_1^{F_1F_2}=A_1^{n-1}C_0^{n}C_0^{n-2}.
    \end{align*}
    Then,
    \begin{align*}
        &F_1\overline{F_3}=B_1^n\overline{C_0^n} \in G.
    \end{align*}
  Note that, $h_{1,2}^{\overline{R}}=h_{n,1} \in G$. Using the conjugation of $B_1^n\overline{C_0^n}$ by $\overline{R}h_{n,1}$, we have
    \begin{align*}
        &(B_1^n\overline{C_0^n})^{\overline{R}h_{n,1}}=B_1^n\overline{C_1^n}\in G,\\
        &(C_0^n\overline{B_1^n})(B_1^n\overline{C_1^n}) = C_0^n\overline{C_1^n} \in G,\\
        &(C_0^n\overline{C_1^n})^{\overline{R}h_{n,1}}=C_1^n\overline{C_0^{n-1}} \in G,\\
        &(C_0^{n-1}\overline{C_1^n})(C_1^n\overline{C_0^n})=C_0^{n-1}\overline{C_0^n}\in G,\\
        &(C_0^{n-1}\overline{C_0^n})^{R^{2-n}}=C_0^1\overline{C_0^2}\in G.
    \end{align*}
    Also, since $B_1^n\overline{C_0^n}, C_0^{n-1}\overline{C_0^n} \in G$,
    \begin{align*}
        &(B_1^n\overline{C_0^n})^{R}=B_1^1\overline{C_0^{1}} \in G,\\
        &(C_0^{n-1}\overline{C_0^n})^{R}=C_0^{n}\overline{C_0^1} \in G,\\
        &(B_1^n\overline{C_0^n})(C_0^{n}\overline{C_0^1})(C_0^{1}\overline{B_1^1})=B_1^n\overline{B_1^1}\in G,\\
        &(B_1^n\overline{B_1^1})^R=B_1^1\overline{B_1^2} \in G.
    \end{align*}
    Finally, since 
    \begin{align*}
        &B_1^3\overline{C_0^{n}}=B_1^3\overline{B_1^2}B_1^2\overline{B_1^1}B_1^1\overline{B_1^n}B_1^n\overline{C_0^{n}} \in G,\\
        &F_4=B_1^3\overline{C_0^{n}}F_1^{R^2}B_1^3\overline{B_1^2}=A_1^1B_1^3B_1^3 \in G,\\
    \end{align*}
    then
    \begin{align*}
        &(B_1^1\overline{B_1^2})^{(B_1^1\overline{B_1^2}F_4)}=A_1^1\overline{B_1^2} \in G,\\
        &(A_1^1\overline{B_1^2})(B_1^2\overline{B_1^1})=A_1^1\overline{B_1^1} \in G,\\
        &(A_1^1\overline{B_1^1})(B_1^1\overline{B_1^2})\overline{(A_1^1\overline{B_1^1})^R}=A_1^1\overline{B_1^1}B_1^1\overline{B_1^2}B_1^2\overline{A_1^2}=A_1^1\overline{A_1^2} \in G.
    \end{align*}
    (Note that by conjugating $A_1^1\overline{A_1^2}$ by $h_{1,2}$ and $R$, one can get $(A_1^1)'\overline{(A_1^2)'} \in G$.)\\
    We have shown that $A_1^1\overline{A_1^2}$, $B_1^1\overline{B_1^2}$, $C_0^1\overline{C_0^2}$, $R$, $\tau$, and $h_{1,2}$ are in $G$. It immediately follows by the same argument as in the proof of the previous theorem (except for the fact that we use Theorem~\ref{thm:23} instead of Theorem~\ref{thmnew}) that $G$ is $\mod(S(n))$.
    
\end{proof}
\subsection{The Jacob's Ladder surface}
For the Jacob's Ladder surface, we use the following model.

\begin{figure}[H]
      \centering
      \includegraphics[width=0.5\textwidth]{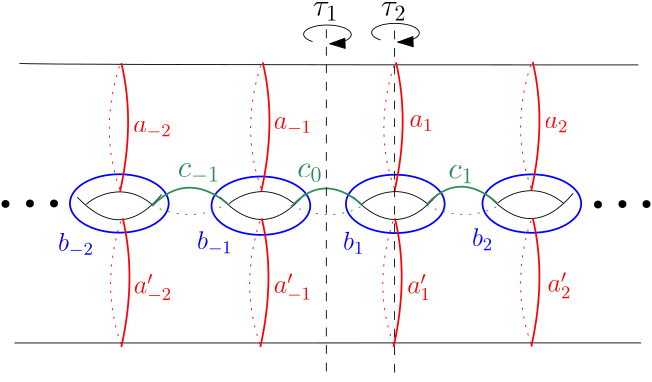}
      \caption{ A model for the Jacob's Ladder surface, showing the indexed curves and the rotational symmetries
      $\tau_1$ and $\tau_2$.}
      \label{fig:curves}
\end{figure}
Observe that
\begin{align*}
    H= \tau_2\tau_1
\end{align*}
is a handle shift.

Throughout this subsection, the Jacob's Ladder surface is denoted by $S$.
\begin{lemma}\label{lem:Jacob}
    The subgroup of $\mod(S)$, generated by 
    \begin{align*}
    \{H,A_1\overline{A_2},A'_1\overline{A'_2},B_1\overline{B_2},C_1\overline{C_2} \}
    \end{align*}
    contains the Dehn twists $A_i,A'_i,B_i$ and $C_j$ for all $|i| \geq 1$ $j\in \mathbb{Z}$.
\end{lemma}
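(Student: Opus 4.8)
The plan is to follow the template of Lemma~\ref{lemm2.2R.apy} and Lemma~\ref{lemm2.3Rapy}, with the handle shift $H$ taking over the index-shifting role played there by $R$ and by $h_{1,2}$. Let $G$ denote the subgroup generated by the five listed elements. I first record the action of $H$ on the standard curves of Figure~\ref{fig:curves}: conjugation by $H$ increases the genus index by one and conjugation by $\overline{H}$ decreases it, so that $(A_1\overline{A_2})^{H^k}=A_{1+k}\overline{A_{2+k}}$ for every $k\in\Z$, and likewise for the $A'$-, $B$-, and $C$-families. Telescoping adjacent differences, for instance $(A_i\overline{A_{i+1}})(A_{i+1}\overline{A_{i+2}})=A_i\overline{A_{i+2}}$, then places every same-type difference $A_i\overline{A_j}$, $A'_i\overline{A'_j}$, $B_i\overline{B_j}$, $C_i\overline{C_j}$ in $G$, for all admissible indices.

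Next I would join the four curve families to one another by the braid relation, exactly as in the derivation of $B_1^1\overline{A_1^2}$ in Lemma~\ref{lemm2.2R.apy}. Whenever two standard curves of different families meet once in the chain of Figure~\ref{fig:curves} --- a $b$-curve with an adjacent $a$- or $a'$-curve, and these with the $c$-curves --- conjugating a same-type difference by a suitable product of differences converts it into a mixed difference such as $B_i\overline{A'_{i+1}}$. Spreading these across all indices with powers of $H$ and telescoping as before, I obtain that $G$ contains a difference $t_p\overline{t_q}$ for \emph{every} pair of standard curves $p,q$ in the system.

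The essential step is to upgrade these differences to a single Dehn twist, and this must be done by a finite manipulation: the telescoping product $A_1\overline{A_2}\cdot A_2\overline{A_3}\cdots$ would only place $A_1$ in the closure of $G$, not in $G$ itself. I would supply the finite argument with the lantern relation. Choose an embedded four-holed sphere $\Sigma_{0,4}\subset S$ whose four boundary curves $\delta_1,\delta_2,\delta_3,\delta_4$ and three interior curves $\alpha,\beta,\gamma$ all belong to the standard system (or are images of standard curves under an element of $G$); identifying such a sphere in Figure~\ref{fig:curves} is the bookkeeping to be carried out. The lantern relation
\begin{align*}
    t_{\delta_1}t_{\delta_2}t_{\delta_3}t_{\delta_4}=t_\alpha t_\beta t_\gamma
\end{align*}
rearranges, using that each boundary curve $\delta_j$ is disjoint from the interior curves $\alpha,\beta,\gamma$ so that the associated twists commute, into
\begin{align*}
    t_{\delta_1}=t_\alpha t_\beta t_\gamma\,\overline{t_{\delta_4}}\,\overline{t_{\delta_3}}\,\overline{t_{\delta_2}}=\big(t_\alpha\overline{t_{\delta_2}}\big)\big(t_\beta\overline{t_{\delta_3}}\big)\big(t_\gamma\overline{t_{\delta_4}}\big).
\end{align*}
Each of the three factors on the right is a difference of two standard curves, hence lies in $G$ by the first two steps; therefore the single twist $t_{\delta_1}$ lies in $G$. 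Note that it is precisely the non-homogeneity of the lantern relation --- four positive against three negative twists --- that lets a balanced product of differences equal a single twist.

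Once one individual twist $t_{\delta_1}\in G$ is in hand, every standard twist follows: for any standard curve $c$ the difference $t_c\overline{t_{\delta_1}}$ lies in $G$ by the second step, so $t_c=(t_c\overline{t_{\delta_1}})\,t_{\delta_1}\in G$. This yields $A_i,A'_i,B_i$ for all $|i|\geq1$ and $C_j$ for all $j\in\Z$, as required. I expect the genuine difficulty to sit entirely in the lantern step --- locating a four-holed sphere whose seven curves are standard (or $G$-translates of standard curves) and checking that the three resulting differences $t_\alpha\overline{t_{\delta_2}}$, $t_\beta\overline{t_{\delta_3}}$, $t_\gamma\overline{t_{\delta_4}}$ are among those produced in the first two steps. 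The earlier steps are routine once the action of $H$ on Figure~\ref{fig:curves} is fixed, and the commutation of boundary and interior lantern curves is exactly what keeps the final isolation finite, so that no closure or limiting argument is needed.
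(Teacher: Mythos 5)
Your proposal follows essentially the same route as the paper's proof: telescoping same-type differences with powers of $H$, braid-relation conjugations to produce mixed differences $t_p\overline{t_q}$, and then exploiting the $4$-versus-$3$ imbalance of the lantern relation to isolate a single twist as a product of differences, after which every standard twist follows by multiplying by further differences. The ``bookkeeping'' you deferred is exactly what the paper carries out: it uses the lantern bounded by $a_1,c_1,c_2,a_3$ with interior curves $a_2,d_1,d_2$, where $d_1,d_2$ are not standard curves but images of standard curves under elements already shown to lie in $G$ (so the differences $D_1\overline{A_1}$, $D_2\overline{C_1}$ require additional conjugation steps beyond the first two stages, as you anticipated), yielding $A_3=(A_2\overline{C_2})(D_1\overline{A_1})(D_2\overline{C_1})\in G$, which is precisely your rearrangement.
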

\begin{proof}
    The proof essentially follows the same steps of Lemma~\ref{lemm2.3apy}, except that we don't actually need the rotation $R$, since $S$ has two ends. Let $G$ be the subgroup topologically generated by the above set where $H$ is the handle shift where $-\infty$ is the repelling and  $+\infty$ is the attracting end.
    By conjugating $A_1\overline{A_2}$ with $H$, we get $A_2\overline{A_3}$. Then,
    \begin{align*}
        (A_1\overline{A_2})(A_2\overline{A_3}) = A_1\overline{A_3} \in G.
    \end{align*}
    Note that the curves $a_1$ and $b_1$ intersect once, so by the braid relation
    \begin{align*}
        (A_1\overline{A_2})(B_1\overline{B_2})(a_1,a_3) = (b_1,a_3).
    \end{align*}
    We conjugate $A_1\overline{A_3}$ by $A_1\overline{A_2}B_1\overline{B_2}$ use the conjugation property of Dehn twists to get
    \begin{align*}
        (A_1\overline{A_2}B_1\overline{B_2})(A_1\overline{A_3})(\overline{B_1\overline{B_2}A_1\overline{A_2})} = B_1\overline{A_3} \in G.
    \end{align*}
    If we start with $C_1\overline{C_2}$ instead of $A_1\overline{A_2}$ and apply the same argument, we get $C_1\overline{A_3} \in G$. Then,
    \begin{align*}
        (B_2\overline{B_1})(B_1\overline{A_3})(A_3\overline{A_2})(A_2\overline{A_1}) = B_2\overline{A_1} \in G, 
    \end{align*}
    \begin{align*}
            (C_1\overline{A_3})(A_3\overline{A_2})(A_2\overline{A_1}) = C_1\overline{A_1} \in G,
    \end{align*}
    \begin{align*}
      (C_2\overline{C_1})(C_1\overline{A_1}) = C_2\overline{A_1} \in G,
    \end{align*}
    \begin{align*}
    (A_2\overline{A_1})(A_1\overline{C_2}) = A_2\overline{C_2} \in G.
    \end{align*}
    Consider the embedded lantern bounded by the curves $a_1,c_1,c_2$ and $a_3$. We have,
    \begin{align*}
    (B_2\overline{A_1})(C_1\overline{A_1})(A_1\overline{A_2})(C_1\overline{A_2})(b_2,a_1) = (d_1,a_1),
    \end{align*}
    and by conjugation we conclude that $D_1\overline{A_1} \in G$. Conjugating $B_1\overline{B_2}$ by $H$, we get that $B_2\overline{B_3}$ and $B_3\overline{A_1}$ is in $G$. Using the lantern relation, 
    \begin{align*}
    (B_3\overline{A_1})(C_2\overline{A_1})(A_3\overline{A_1})(B_3\overline{A_1})(d_1,a_1) = (d_2,a_1),
    \end{align*}
    so by conjugation, $D_2\overline{A_1}\in G$.
    
\begin{figure}[H]
    \centering
    \begin{minipage}[b]{0.45\textwidth}  % Adjust width to 48% of text width
        \centering
        \adjustbox{valign=m}{\includegraphics[width=\linewidth, height=4cm, keepaspectratio]{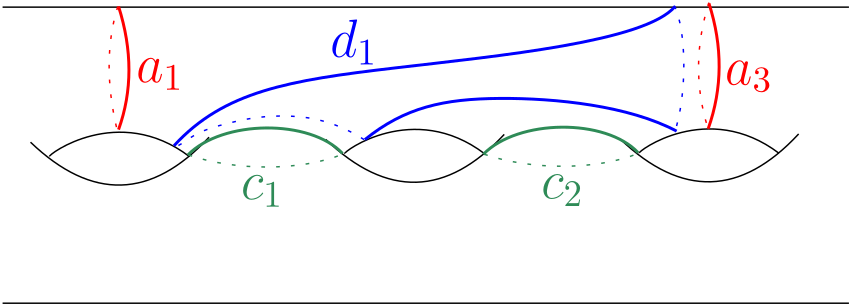}}
    \end{minipage}
    \hfill % This will spread out the subfigures evenly
    \begin{minipage}[b]{0.45\textwidth}  % Adjust width to 48% of text width
        \centering
        \adjustbox{valign=m}{\includegraphics[width=\linewidth, height=4cm, keepaspectratio]{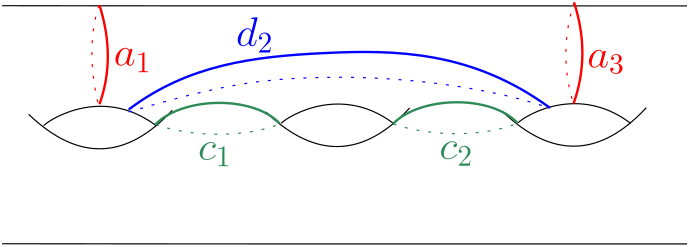}}
    \end{minipage}
    \caption{The curves $d_1$ and $d_2$.}
    \label{fig:d1d2}
\end{figure}

    Finally, 
    \begin{align*}
    (D_2\overline{A_1})(A_1\overline{C_1}) = D_2\overline{C_1} \in G.
    \end{align*}
    By the lantern relation,
    \begin{align*}
    A_1C_1C_2A_3 = A_2D_1D_2,
    \end{align*}
    and thus
    \begin{align*}
    A_3 = (A_2\overline{C_2})(D_1\overline{A_1})(D_2\overline{C_1}) \in G.
    \end{align*}
    Conjugating with powers of $H$, we have that $A_i \in G$ for any $|i| \geq 1$. It immediately follows that every $B_i$ and $C_j$ is also in $G$. Note that by following the same procedures for $A'_i$ instead of $A_i$ we get that $A'_i \in G$ and we are done.
\end{proof}
\begin{theorem}\label{thm:third}
The mapping class group of the Jacob's Ladder surface is topologically generated by three elements. 
\end{theorem}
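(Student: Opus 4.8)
The plan is to mirror the strategy of Theorems~\ref{thm:first} and~\ref{thm:second}, but to exploit the fact that for two ends the symmetric group $\Sym_2$ is generated by a single transposition, so only one end-swapping element is needed. Concretely, I would take the three generators to be the two symmetries $\tau_1,\tau_2$ of the model in Figure~\ref{fig:curves} together with a single compactly supported product of Dehn twists $F_1$ (for instance $F_1=A_1 B_1 C_1 A'_1$, or a similar product chosen so that its $H$-conjugates telescope cleanly). Each $\tau_i$ interchanges the two ends of $S$, so $G=\langle \tau_1,\tau_2,F_1\rangle$ already surjects onto $\Sym_2$ via either $\tau_i$; moreover, by the observation preceding Lemma~\ref{lem:Jacob}, the product $H=\tau_2\tau_1$ is a handle shift, so $G$ contains the infinite-support generator $H$ for free. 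This is the economical feature of the $n=2$ case: the roles played by the rotation $R$ and by a separate handle shift in the higher-$n$ theorems are both absorbed into the pair $\tau_1,\tau_2$.

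The heart of the argument is to recover from $\{\tau_1,\tau_2,F_1\}$ the five elements $H$, $A_1\overline{A_2}$, $A'_1\overline{A'_2}$, $B_1\overline{B_2}$, $C_1\overline{C_2}$ that drive Lemma~\ref{lem:Jacob}. I would proceed exactly as in the proofs above: conjugating $F_1$ by $H$ shifts every index by one toward the attracting end, and forming products of $F_1$ with its conjugates $HF_1\overline{H}$, $H^2F_1\overline{H}^2$, and so on makes all but one Dehn-twist factor cancel, isolating differences such as $B_1\overline{B_2}$ and $C_1\overline{C_2}$. Braid relations among the once-intersecting pairs (e.g.\ $a_i$ and $b_i$), used exactly as in Lemma~\ref{lem:Jacob}, then convert these into the remaining seeds; the symmetry $\tau_1$ is what carries the unprimed rail $\{A_i\}$ onto the primed rail $\{A'_i\}$, producing $A'_1\overline{A'_2}$ from $A_1\overline{A_2}$. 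Once all five seeds lie in $G$, Lemma~\ref{lem:Jacob} places every Dehn twist $A_i,A'_i,B_i,C_j$ in $G$, so the Dehn twists topologically generate $\overline{\pmod_\mathrm{c}(S)}\subseteq\overline{G}$; together with $H\in G$ and the splitting $\pmod(S)=\overline{\pmod_\mathrm{c}(S)}\rtimes\Z$ this gives $\pmod(S)\subseteq\overline{G}$, and the surjection onto $\Sym_2$ from the exact sequence upgrades this to density of $G$ in all of $\mod(S)$.

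The main obstacle I anticipate is the bookkeeping difficulty flagged in the remark preceding Theorem~\ref{thmnew}: with only the single shift operator $H$ available and no rotation $R$, every cancellation must be arranged through $H$-conjugation alone, so $F_1$ must be chosen very carefully so that the intermediate products remain disjointly supported and the telescoping actually closes up. The two delicate points are verifying that $\tau_1$ interchanges the two rails in precisely the way needed to yield $A'_1\overline{A'_2}$, and checking that the handle shift never needs to be disentangled from the Dehn-twist factors (which is automatic here, since $H=\tau_2\tau_1$ is obtained directly). Should it prove cleaner to absorb a handle shift into the seed element—taking $F_1=A_1 B_1 C_1 A'_1 H$ in the manner of Theorem~\ref{thm:first}—then, after all individual twists have been produced, one isolates it via $H=(\overline{A'_1}\,\overline{C_1}\,\overline{B_1}\,\overline{A_1})F_1\in G$, which recovers the same conclusion $G\cong\mod(S)$.
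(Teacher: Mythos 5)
Your scaffolding matches the paper's exactly: the three generators are $\tau_1,\tau_2$ and one compactly supported product of Dehn twists $F_1$; the handle shift $H=\tau_2\tau_1$ comes for free; the goal is to produce the five seeds $H$, $A_1\overline{A_2}$, $A'_1\overline{A'_2}$, $B_1\overline{B_2}$, $C_1\overline{C_2}$ of Lemma~\ref{lem:Jacob}; and the conclusion then follows from that lemma, the splitting of $\pmod(S)$, and the surjection onto $\Sym_2$. The problem is that the entire mathematical content of the theorem lies in exhibiting a specific $F_1$ for which the telescoping actually works, and this is precisely what you leave open. Your candidate $F_1=A_1B_1C_1A'_1$ cannot work, for two reasons. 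First, its factors are not pairwise disjoint ($a_1$ meets $b_1$ once, as does $a'_1$, and $c_1$ meets $b_1$ and $b_2$), so the factors do not commute and none of the disjointness-based regrouping used in Theorems~\ref{thm:first} and~\ref{thm:second} applies. Second, and more fatally, all four curves sit at a single position, so $F_1$ and its conjugate $F_1^H=A_2B_2C_2A'_2$ share no common factors at all; the product $F_1\overline{F_1^H}$ cancels nothing, contrary to your claim that ``all but one Dehn-twist factor cancel.'' The cancellation mechanism requires the support of the seed to be spread over widely separated indices, so that (i) its factors commute and (ii) the curves of $F_1$ and of $F_1^{H}$ meet in only a controlled set of once-intersecting pairs, whence conjugations such as $F_2^{F_2F_1}$ implement braid moves replacing exactly one or two factors, and differences like $B_2\overline{A_2}$ arise as products $F_i\overline{F_j}$ of elements sharing all but one factor. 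The paper's seed $F_1=A_1A'_6C_1B_3\overline{B_{11}}\,\overline{C_{12}}\,\overline{A'_8}\,\overline{A_{13}}$, with eight pairwise disjoint factors at indices $1,1,3,6,8,11,12,13$ and mixed signs, is engineered exactly for this, and verifying the resulting dozen-odd conjugation identities \emph{is} the proof; it is not a detail that can be deferred. Absorbing $H$ into the seed, as in your fallback $F_1=A_1B_1C_1A'_1H$, does not repair the clustering problem.

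There is a second gap: you propose to obtain $A'_1\overline{A'_2}$ from $A_1\overline{A_2}$ by conjugating with $\tau_1$, asserting that $\tau_1$ carries the unprimed rail onto the primed rail. In the paper's model $\tau_1$ negates indices within the same curve families (the paper uses $\tau_1 B_{1}\overline{C_1}\tau_1 = B_{-1}\overline{C_{-1}}$), and the paper does not obtain the primed seeds by any $\tau$-conjugation; instead it runs a separate chain of computations ($F_9$, $F_{10}$, $F_{11}$, exploiting that $a'_6$ meets $b_6$ once) to get $A'_6\overline{B_6}$ and only then $A'_1\overline{A'_2}$. So this step of your plan rests on an unverified, and in this model apparently false, geometric claim. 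You flag both difficulties yourself as anticipated obstacles, which is honest, but flagging them does not close them: as written, the proposal reproduces the paper's framing while omitting its two essential steps.
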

\begin{proof}
    Let $\tau_1,\tau_2$ be the rotations as shown in Figure~\ref{fig:curves}, and let $G$ be the subgroup topologically generated by the set $\{\tau_1,\tau_2,A_1A'_6C_1B_3\overline{B_{11}}\mkern3mu\overline{C_{12}}\mkern3mu\overline{A'_8}\mkern3mu\overline{A_{13}}\}$.
    Let 
    \begin{align*}
F_1=A_1A'_6C_1B_3\overline{B_{11}}\mkern3mu\overline{C_{12}}\mkern3mu\overline{A'_8}\mkern3mu\overline{A_{13}}.
    \end{align*}

    Note that $\tau_2\tau_1 = H \in G$.
    \begin{figure}[H]
      \centering
      \includegraphics[width=0.90\textwidth]{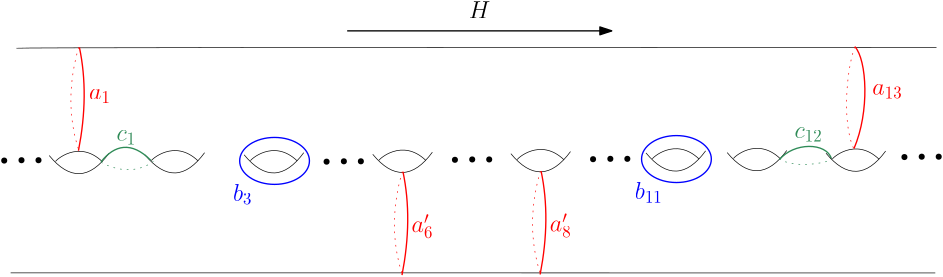}
      \caption{The curves corresponding to the Dehn twist factors of $F_1=A_1A'_6C_1B_3\overline{B_{11}}\mkern3mu\overline{C_{12}}\mkern3mu\overline{A'_8}\mkern3mu\overline{A_{13}}$.}
      \label{fig:n2curves}
    \end{figure}
    Since $\tau_2\tau_1 = H \in G$, we get 
    \begin{align*}
    F_2 = F_1^H = A_2A'_7C_2B_4\overline{B_{12}}\mkern3mu\overline{C_{13}}\mkern3mu\overline{A'_9}\mkern3mu\overline{A_{14}} \in G.
    \end{align*}
    Since $c_2$ intersect once with $b_3$ and $c_{12}$ intersects once with $b_{12}$, we have
    \begin{align*}
    F_3 = F_2^{F_2F_1} = A_2A'_7B_3B_4\overline{C_{12}}\mkern3mu\overline{C_{13}}\mkern3mu\overline{A'_9}\mkern3mu\overline{A_{14}} \in G.
    \end{align*}
    We conjugate $F_3$ by $\overline{H}$ to get 
    \[
    F_4 = F_3^{\overline{H}}=A_1A'_6B_2B_3\overline{C_{11}}\mkern3mu\overline{C_{12}}\mkern3mu\overline{A'_8}\mkern3mu\overline{A_{13}} \in G,
    \]
    and then since $a_2$ intersects with $b_2$ once,
    \[
    F_5 = F_4^{F_4F_3} = A_1A_2A'_6B_3\overline{C_{11}}\mkern3mu\overline{C_{12}}\mkern3mu\overline{A'_8}\mkern3mu\overline{A_{13}} \in G,
    \]
    and we get $F_4\overline{F_5} = B_2\overline{A_2} \in G$ which imply $A_i\overline{B_i}\in G$ for all $i\in \mathbb{Z}\setminus \{0\}$. Next, note that 
    \[
    F_6 = (A_3\overline{B_3})F_1 = A_1A_3A'_6C_1\overline{B_{11}}\mkern3mu\overline{C_{12}}\mkern3mu\overline{A'_8}\mkern3mu\overline{A_{13}} \in G.
    \]
    Then,
    \[
    F_7 = F_6^H = A_2A_4A'_7C_2\overline{B_{12}}\mkern3mu\overline{C_{13}}\mkern3mu\overline{A'_9}\mkern3mu\overline{A_{14}} \in G. 
    \]
    Since $b_{12}$ intersects once with $c_{12}$,
    \[
    F_8 = F_7^{F_7F_6} = A_2A_4A'_7C_2\overline{C_{12}}\mkern3mu\overline{C_{13}}\mkern3mu\overline{A'_9}\mkern3mu\overline{A_{14}} \in G.
    \]
    Now we have $\overline{F_7}F_8 = B_{12}\overline{C_{12}} \in G$ which implies that $B_i\overline{C_i}$ for $i\geq 1$. Note that $\tau_1B_{1}\overline{C_1}\tau_1 = B_{-1}\overline{C_{-1}}$. Since $H^2$ sends $b_{-1}$ to $b_2$ and $c_{-1}$ to $c_1$, we have $B_{2}\overline{C_1}\in G$ and thus
    \[
    (B_1\overline{C_1})(C_1\overline{B_2}) = B_1\overline{B_2} \in G.
    \]
    Let 
    \begin{align*}
    F_9 &= (B_1\overline{A_1})(B_2\overline{C_1})F_1(B_{11}\overline{C_{11}})\\
    &=A'_6B_1B_2B_3\overline{C_{11}}\mkern3mu\overline{C_{12}}\mkern3mu\overline{A'_8}\mkern3mu\overline{A_{13}}\in G.
    \end{align*}
    Then 
    \[
    F_{10} = F_9^{H^3} = A'_9B_4B_5B_6\overline{C_{14}}\mkern3mu\overline{C_{15}}\mkern3mu\overline{A'_{11}}\mkern3mu\overline{A_{16}}\in G.
    \]
    Note that $a'_6$ intersects once with $b_6$. It follows that
    \[
    F_{11} = F_{10}^{F_{10}F_{9}} =A'_6A'_9B_4B_5\overline{C_{14}}\mkern3mu\overline{C_{15}}\mkern3mu\overline{A'_{11}}\mkern3mu\overline{A_{16}} \in G.
    \]
    Now we have $F_{11}\overline{F_{10}} = A'_6\overline{B_6}\in G$ which implies that $A'_i\overline{B_i}\in G$. Finally,
    \[
    (A_1\overline{B_1})(B_1\overline{B_2})(B_2\overline{A_2}) = A_1\overline{A_2}\in G,
    \]
    \[
    (C_1\overline{B_1})(B_1\overline{B_2})(B_2\overline{C_2}) = C_1\overline{C_2}\in G,
    \]
    \[
    (A'_1\overline{B_1})(B_1\overline{B_2})(B_2\overline{A'_2}) = A'_1\overline{A'_2} \in G,
    \]
    and by Lemma~\ref{lem:Jacob}, $G$ contains the Dehn twists $A_i,A'_i,B_i$ and $C_j$ for all $|i| \geq 1$ $j\in \mathbb{Z}$. By [\citenum{tez}, Proposition 6.1.18], $G$ contains $\overline{\pmod_\mathrm{c}(S)}$. Since the action of $\tau_1$ on the space of ends generate $\Sym_2$, and $H$ is in $G$, $G$ must be $\mod(S(n))$.
    
\end{proof}
\subsection{The Loch Ness Monster surface}
\vspace{2mm}

Throughout this subsection, the Loch Ness Monster surface is denoted by $S$. To establish our minimal generating set for the Loch Ness Monster surface, we use a different model for it.

\begin{proposition}[{\cite[Proposition 6.1.15]{tez}}]\label{prop:loch}
     The mapping class group of the Loch Ness Monster surface is topologically generated by the set of Dehn twists about the curves $\{a_i, b_i, c_j \mid i \in \mathbb{Z}\setminus\{0\}, j \in \mathbb{Z}\}$, as shown in Figure~\ref{fig:newmodelofn1}.
\end{proposition}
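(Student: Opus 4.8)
The plan is to reduce this statement, via an exhaustion of $S$ by compact subsurfaces, to the classical finite-type generation theorems of Dehn, Lickorish and Humphries~\cite{Dehn1987, Humphries1979, Lickorish1964}. Because the Loch Ness Monster surface $S$ has a single end, every self-homeomorphism fixes $\Ends(S)$ pointwise, so the exact sequence in Section~2 (with $\Sym_1$ trivial) gives $\mod(S) = \pmod(S)$. Applying the Aramayona--Patel--Vlamis structure theorem (Corollary~6 of~\cite{arapatevla}) with $n = 1$ yields
\[
\pmod(S) = \overline{\pmod_\mathrm{c}(S)} \rtimes \Z^{0} = \overline{\pmod_\mathrm{c}(S)},
\]
so, unlike the multi-ended cases, there is no handle-shift factor to produce. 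Hence it suffices to prove that the abstract subgroup $G$ generated by the Dehn twists about $\{a_i, b_i, c_j\}$ equals $\pmod_\mathrm{c}(S)$: since these twists are compactly supported we automatically have $G \subseteq \pmod_\mathrm{c}(S)$, and the reverse inclusion then gives $\overline{G} = \overline{\pmod_\mathrm{c}(S)} = \mod(S)$, which is exactly the assertion that the listed twists topologically generate $\mod(S)$.

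To obtain $\pmod_\mathrm{c}(S) \subseteq G$, I would exhaust $S$ by an increasing sequence of compact, incompressible subsurfaces $S_1 \subset S_2 \subset \cdots$ with $\bigcup_k S_k = S$, arranged compatibly with Figure~\ref{fig:newmodelofn1} so that each $S_k$ is a surface of genus $g_k$ with one boundary component, $g_k \to \infty$, and so that the curves $a_i, b_i, c_j$ lying in $S_k$ form a linear chain of the Humphries type for $S_k$. Standard theory identifies the compactly supported mapping class group with the direct limit $\pmod_\mathrm{c}(S) = \varinjlim_k \mod(S_k)$, the maps being induced by extension by the identity. Since any $\varphi \in \pmod_\mathrm{c}(S)$ has support in some compact set, it represents a class in $\mod(S_k)$ for $k$ large; by the Dehn--Lickorish--Humphries theorem this class is a product of Dehn twists about the chain curves in $S_k$, all of which appear in the list $\{a_i, b_i, c_j\}$. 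Therefore $\varphi \in G$, proving $\pmod_\mathrm{c}(S) \subseteq G$.

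The step requiring the most care is making the exhaustion and the curve system match up cleanly, and in particular handling the boundary twist of each $S_k$. A priori the twist about $\partial S_k$ is not one of the listed curves; the point is that, inside $\mod(S_k)$ itself, the chain relation expresses this boundary twist as a power of a product of the chain generators of $S_k$, so it already lies in the subgroup generated by the $a_i, b_i, c_j$ that sit in $S_k$ (and, failing that, it becomes an interior separating curve once we pass to $S_{k+1}$, where the lantern and chain relations again express its twist in the chain generators). One must also verify that the restriction to each $S_k$ of the configuration drawn in Figure~\ref{fig:newmodelofn1} genuinely satisfies the hypotheses of the Humphries generating theorem, including the correct connecting curves $c_j$ between consecutive handles. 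Once these compatibility checks are settled, the direct-limit description forces $G$ to contain every element of $\pmod_\mathrm{c}(S)$, and the density conclusion $\overline{G} = \mod(S)$ is immediate.
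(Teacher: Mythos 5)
Note first that the paper itself offers no proof of this proposition: it is imported verbatim from \cite{tez} (Proposition 6.1.15), so your argument can only be measured against the expected proof of the cited source rather than against anything internal to the paper. Your proof is correct and follows the standard route, which is almost certainly that of \cite{tez}: one-endedness forces $\mod(S)=\pmod(S)$; the Aramayona--Patel--Vlamis splitting quoted in Section 2, applied with $n=1$ (equivalently, the Patel--Vlamis theorem that $\overline{\pmod_\mathrm{c}(S)}=\pmod(S)$ when at most one end is accumulated by genus \cite{Patel2018}), removes the handle-shift factor; and the identification of $\pmod_\mathrm{c}(S)$ with a direct limit of finite-type mapping class groups over an exhaustion, combined with Dehn--Lickorish--Humphries and the chain relation absorbing the boundary twists, finishes the job. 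Your reduction ``$G \supseteq \pmod_\mathrm{c}(S)$ implies $\overline{G}=\mod(S)$'' is exactly right. One precision point you should tighten: the curves of Figure~\ref{fig:newmodelofn1} lying in $S_k$ do not form a ``linear chain,'' and a bare chain would not suffice---for genus $\geq 3$ the twists along a chain generate only the hyperelliptic (symmetric) mapping class group, a proper subgroup---rather they form the full Lickorish configuration (each $b_i$ dual to $a_i$, with the $c_j$ joining consecutive handles), which contains a Humphries system (a maximal chain together with an extra dual curve). Read with that configuration in place of ``chain,'' which is precisely the compatibility check you flag at the end, the argument is complete.
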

\begin{figure}[H]
      \centering
      \includegraphics[width=0.45\textwidth]{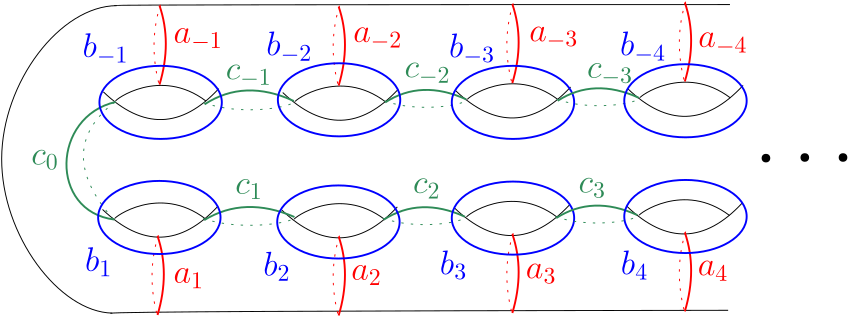}
      \caption{The generating curves for the Loch Ness Monster surface. }
      \label{fig:newmodelofn1}
   \end{figure}

\begin{lemma}\label{lem:a1a2}
    The subgroup of $\mod(S)$ topologically generated by 
    \[\{H,A_1\overline{A_2},B_1\overline{B_2},C_1\overline{C_2} \}\]
    contains the Dehn twists $A_i,B_i$ and $C_j$ for all $|i| \geq 1$ and $j\in \mathbb{Z}$.
\end{lemma}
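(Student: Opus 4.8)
The plan is to run the argument of Lemma~\ref{lem:Jacob} essentially verbatim, simply deleting every step that involved the auxiliary $A'$-curves, since the Loch Ness model of Figure~\ref{fig:newmodelofn1} carries only the three families $a_i$, $b_i$, $c_j$. Throughout I take $H$ to be the given handle shift, which shifts all indices by one, so that conjugation by $H$ and its powers translates any relative twist along the bi-infinite chain of handles.

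First I would build up a large supply of \emph{relative} twists. Conjugating $A_1\overline{A_2}$ by powers of $H$ gives $A_k\overline{A_{k+1}}\in G$ for every $k$, and telescoping these products yields $A_i\overline{A_j}\in G$ for arbitrary indices; the same applies to the $B$- and $C$-families. Next, using that $a_1$ meets $b_1$ once and that $c_1$ meets the relevant $a$-curve once (read off from Figure~\ref{fig:newmodelofn1}), I would apply the braid-relation / change-of-coordinates trick exactly as in Lemma~\ref{lem:Jacob}: conjugating a relative twist such as $A_1\overline{A_3}$ by a suitable word in the generators carries one endpoint curve onto a $b$- or $c$-curve, producing mixed relatives like $B_1\overline{A_3}$ and $C_1\overline{A_3}$, and hence (after recombining with $A$-relatives) elements of the form $B_2\overline{A_1}$, $C_1\overline{A_1}$, $C_2\overline{A_1}$, and $A_2\overline{C_2}$.

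The crux is then to isolate a single absolute twist via the lantern relation. I would locate the embedded four-holed sphere bounded by $a_1,c_1,c_2,a_3$ with interior curves $a_2,d_1,d_2$, just as in the Jacob's Ladder case. The change-of-coordinates principle upgrades the mixed relatives into $D_1\overline{A_1}$ and $D_2\overline{A_1}$ (by conjugating $B_2\overline{A_1}$ by a mapping class in $G$ that sends $b_2\mapsto d_1$ while fixing $a_1$, and similarly for $d_2$), and combining with $C_1\overline{A_1}$ gives $D_2\overline{C_1}$. The lantern relation $A_1C_1C_2A_3=A_2D_1D_2$ then rearranges into
\[
A_3 = (A_2\overline{C_2})(D_1\overline{A_1})(D_2\overline{C_1}) \in G,
\]
exhibiting one honest Dehn twist as a product of elements already shown to lie in $G$. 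Conjugating by powers of $H$ yields $A_i\in G$ for all $|i|\ge 1$, and multiplying the relatives $B_i\overline{A_i}$, $C_j\overline{A_i}$ by these absolute twists recovers every $B_i$ and $C_j$.

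The main obstacle is Step three: one must verify that the new Loch Ness model genuinely contains the required lantern configuration and that the intersection pattern matches the Jacob's Ladder picture, so that the braid and change-of-coordinates manipulations apply unchanged. If the combinatorics of the curves in Figure~\ref{fig:newmodelofn1} differ, the explicit words producing $D_1\overline{A_1}$ and $D_2\overline{A_1}$ would have to be re-derived; but the structural skeleton of the argument is identical to that of Lemma~\ref{lem:Jacob}, with the $A'$-bookkeeping removed.
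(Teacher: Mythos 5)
Your proposal is correct and matches the paper's own proof, which simply states that the argument of Lemma~\ref{lem:Jacob} carries over verbatim (minus the $A'$-curves), the only change being how the lantern is embedded in the Loch Ness model (Figure~\ref{fig:ld1d2}). Your closing caveat about verifying the lantern configuration and intersection pattern in Figure~\ref{fig:newmodelofn1} is precisely the one point the paper addresses, so the two arguments coincide.
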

\begin{proof}
    The proof is essentially the same with Lemma~\ref{lem:Jacob} except for the fact that the a lantern is embedded as seen in Figure~\ref{fig:ld1d2}.
    
\begin{figure}[H]
    \centering
    \begin{minipage}[b]{0.45\textwidth}  % Adjust width to 48% of text width
        \centering
        \adjustbox{valign=m}{\includegraphics[width=\linewidth, height=4cm, keepaspectratio]{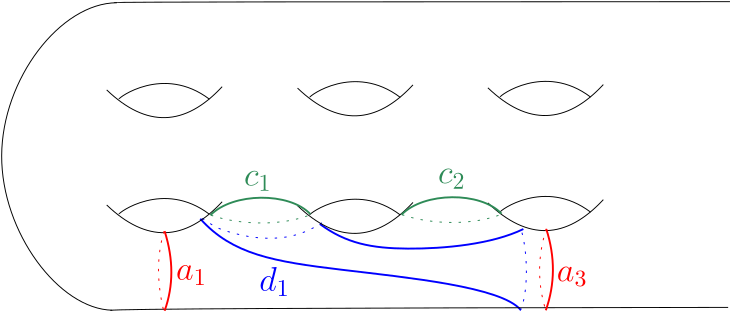}}
    \end{minipage}
    \hfill % This will spread out the subfigures evenly
    \begin{minipage}[b]{0.45\textwidth}  % Adjust width to 48% of text width
        \centering
        \adjustbox{valign=m}{\includegraphics[width=\linewidth, height=4cm, keepaspectratio]{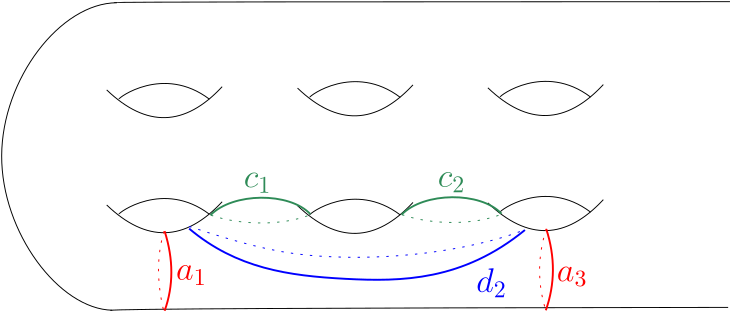}}
    \end{minipage}
    \caption{The curves $d_1$ and $d_2$.}
    \label{fig:ld1d2}
\end{figure}
\end{proof}

\begin{theorem}\label{thm:fourth}
    The mapping class group of the Loch Ness Monster surface is topologically generated by two elements.
\end{theorem}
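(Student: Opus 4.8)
The plan is to exhibit two elements whose topologically generated subgroup $G$ contains the four-element set $\{H, A_1\overline{A_2}, B_1\overline{B_2}, C_1\overline{C_2}\}$ of Lemma~\ref{lem:a1a2}, after which that lemma and Proposition~\ref{prop:loch} finish the argument. I would take the two generators to be the handle shift $H$ and a single product of Dehn twists $F_1$. The key structural simplification is that the Loch Ness Monster surface $S$ has exactly one end, which is therefore fixed by every mapping class, so $\mod(S)=\pmod(S)$ and there is no symmetric-group factor to realize. This is precisely why one fewer generator is needed than for the Jacob's Ladder surface, where the rotations $\tau_1,\tau_2$ were used both to build $H=\tau_2\tau_1$ and to surject onto $\Sym_2$: here we may simply take $H$ itself as a generator, and it remains only to manufacture the three twist-differences.

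Following the model of Figure~\ref{fig:newmodelofn1}, I would choose $F_1$ to be a product of right-handed Dehn twists and their inverses about a collection of pairwise-disjoint curves of $a$-, $b$-, and $c$-type, with large gaps between their indices — in direct analogy with the element $F_1=A_1A'_6C_1B_3\overline{B_{11}}\,\overline{C_{12}}\,\overline{A'_8}\,\overline{A_{13}}$ used in the proof of Theorem~\ref{thm:third}, but pared down to the single family $\{a_i,b_i,c_j\}$ available on $S$. Because $H$ advances every index by a fixed amount, the conjugate $F_2=F_1^{H}$ is the same product with all indices shifted. The large index gaps ensure that, apart from a few deliberately placed single intersections (between some $a_i$ and $b_i$, or some $b_i$ and $c_i$), the factors of $F_1$ and of its $H$-shifts are supported on disjoint subsurfaces and hence commute.

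From here I would reproduce, essentially verbatim, the chain of manipulations in the proof of Theorem~\ref{thm:third}. Forming products $F_i\overline{F_j}$ cancels the commuting factors and isolates single twist-differences, while conjugating by an element realizing a single intersection and invoking the braid relation — equivalently the change-of-coordinates identity $t_{\phi(c)}=\phi\,t_c\,\overline{\phi}$ — converts a twist about one curve into a twist about an adjacent one. This produces in turn elements of the form $A_i\overline{B_i}$ and $B_i\overline{C_i}$; combining these with their $H$-shifts yields $B_1\overline{B_2}$, and then the three required differences $A_1\overline{A_2}$, $B_1\overline{B_2}$, $C_1\overline{C_2}$. At that point $G$ contains $\{H,A_1\overline{A_2},B_1\overline{B_2},C_1\overline{C_2}\}$, so Lemma~\ref{lem:a1a2} places all Dehn twists $A_i,B_i,C_j$ into $G$, and Proposition~\ref{prop:loch} shows these are topologically dense, giving $G\cong\mod(S)$.

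The main obstacle is the combinatorial design of $F_1$: the index gaps must be calibrated so that every intermediate conjugation meets exactly the intended single intersection and introduces no spurious overlaps, and so that the shifts by $H$ align the curves needed for each successive braid move. This is the same delicate accounting encountered in Lemma~\ref{lem:Jacob} and Theorem~\ref{thm:third}, but it is genuinely easier here because there is only one end — no rotations $\tau_i$ enter the computation, and one never has to track curves lying on distinct ends simultaneously.
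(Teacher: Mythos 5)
Your strategy is exactly the paper's: take $H$ together with a single product $F_1$ of Dehn twists about disjoint curves, conjugate by powers of $H$, cancel commuting factors and apply braid relations to extract $A_1\overline{A_2}$, $B_1\overline{B_2}$, $C_1\overline{C_2}$, and then invoke Lemma~\ref{lem:a1a2} and Proposition~\ref{prop:loch}. Your structural remark that the single end forces $\mod(S)=\pmod(S)$, so that no analogue of the rotations $\tau_1,\tau_2$ is needed and $H$ can simply be taken as a generator, is also the paper's reason why two generators suffice here.

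However, the proposal has a genuine gap: the element $F_1$ is never written down, and the chain of manipulations is never executed. What you describe as ``the main obstacle'' --- choosing the curve types and index gaps so that each conjugation meets exactly one intended intersection and every unwanted factor cancels --- is not a routine verification that can be deferred; it is the entire mathematical content of the theorem. Nothing in your text lets a reader check that \emph{any} configuration of curves makes the scheme close up, and your guess at the shape of $F_1$ (a pared-down analogue of the eight-factor element of Theorem~\ref{thm:third}) points in a more complicated direction than necessary. The paper resolves this with the strikingly small choice $F_1=A_4C_0B_{-2}$ --- three twists, one of each type, about pairwise disjoint curves --- and then carries out an explicit derivation: $F_2=F_1^H=A_5C_1B_{-1}$; the single intersection of $c_0$ with $b_{-1}$ and the braid relation give $F_3=F_2^{F_2F_1}=A_5C_0C_1$, hence $F_2\overline{F_3}=B_{-1}\overline{C_0}\in G$; iterating such steps yields $B_1\overline{B_2}$, then $A_{-2}\overline{B_{-2}}$ and $A_{-1}\overline{B_{-2}}$ (the latter via a conjugation exploiting the intersection of $b_{-2}$ with $c_{-1}$), hence $A_1\overline{A_2}$, and finally $C_0\overline{C_1}$ and $C_1\overline{C_2}$. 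Until you exhibit a concrete $F_1$ and verify such a chain, the proposal remains a plan for a proof rather than a proof.
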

\begin{proof}
 Let $G$ be the subgroup topologically generated by the set $\{H,F_1\}$, where $H$ is a handle shift as shown in Figure~\ref{fig:n1curves}, and $F_1 = A_4C_0B_{-2}$. 
 
\begin{figure}[H]
      \centering
      \includegraphics[width=0.55\textwidth]{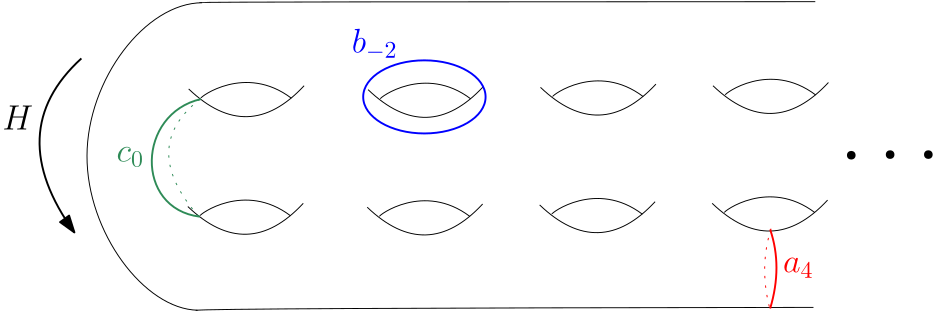}
      \caption{The disjointly supported curves involved in $F_1= A_4C_0B_{-2}$.}
      \label{fig:n1curves}
\end{figure}

Conjugating $F_1$ by $H$, we get
    \[
    F_2 = F_1^H =A_5C_1B_{-1}\in G.
    \]
Note that the curves $c_0$ and $b_{-1}$ intersect once. By the braid relation, 
    \begin{align*}
        F_3 = F_2^{F_2F_1} &= (A_5C_1B_{-1})(A_4C_0B_{-2})(A_5C_1B_{-1})(\overline{B_{-2}}\mkern3mu\overline{C_0}\mkern3mu\overline{A_4})(\overline{B_{-1}}\mkern3mu\overline{C_1}\mkern3mu \overline{A_5})\\
        &=(A_5C_1B_{-1})C_0(C_1B_{-1})\overline{C_0}(\overline{B_{-1}}\mkern3mu\overline{C_1}\mkern3mu) \\
        &=A_5C_0C_1\in G.
    \end{align*}
    Then, $F_2\overline{F_3} = B_{-1}\overline{C_0}$ is in $G$, which implies that $B_2\overline{C_2}$ is also in $G$, through conjugation by $H^2$. Next,
    \begin{align*}
        F_4 = (B_2\overline{C_2})(F_1^{H^2}) = (B_2\overline{C_2})A_6C_2B_1 = A_6B_1B_2 \in G.
    \end{align*}
    Since $c_0$ and $b_1$ intersect once,
    \begin{align*}
        F_5 = F_4^{F_4F_1} &= B_1C_0(A_6B_1B_2)\overline{C_0}\mkern3mu\overline{B_1}\\
        &= A_6C_0B_2 \in G.
    \end{align*}
    Note that $F_4\overline{F_5} = B_1\overline{C_0} \in G$, and $(B_{-1}\overline{C_0})(C_0\overline{B_1}) = B_{-1}\overline{B_1}\in G$ implies $B_1\overline{B_2}\in G$. 
    Conjugate $F_1$ with $\overline{H^5}$ so that 
    \begin{align*}
        F_6 = F_1^{\overline{H^5}} = A_{-2}C_{-5}B_{-7}.
    \end{align*}
    Then,
    \[
    F_7 = (B_{-6}\overline{C_{-5}})A_{-2}C_{-5}B_{-7} = A_{-2}B_{-6}B_{-7} \in G.
    \]
    Since $a_{-2}$ and $b_{-2}$ intersect once, 
    \begin{align*}
        F_8 = F_7^{F_7F_1} &= A_{-2}B_{-2}(A_{-2}B_{-6}B_{-7})\overline{B_{-2}}\mkern3mu \overline{A_{-2}}\\
        &= B_{-2}B_{-6}B_{-7} \in G.
    \end{align*}
    Then, $F_7\overline{F_8} = A_{-2}\overline{B_{-2}}$. Note also that $(A_{-1}\overline{B_{-1}})(B_{-1}\overline{C_{-1}}) =A_{-1}\overline{C_{-1}} \in G.$

    Next we consider the action of $(C_{-1}\overline{A_{-1}})F_1$ on the pair of curves $(a_{-1},c_{-1})$. Since $b_{-2}$ and $c_{-1}$ intersect once, we see that 
    \[
        (C_{-1}\overline{A_{-1}})F_1(a_{-1},c_{-1}) = (C_{-1}\overline{A_{-1}})A_4C_0B_{-2}(a_{-1},c_{-1}) = (a_{-1},b_{-2}).
    \]
    We conjugate $A_{-1}\overline{C_{-1}}$ with $(C_{-1}\overline{A_{-1}})F_1$
    \begin{align*}
        F_9 &= (C_{-1}\overline{A_{-1}}A_4C_0B_{-2})(A_{-1}\overline{C_{-1}})\overline{(C_{-1}\overline{A_{-1}}A_4C_0B_{-2})}\\
        &=(C_{-1}B_{-2})(A_{-1}\overline{C_{-1}})\overline{B_{-2}}\mkern3mu\overline{C_{-1}}\\
        &=A_{-1}\overline{B_{-2}} \in G.
    \end{align*}
    Then,
    \begin{align*}
        &(A_{-2}\overline{B_{-2}})(B_{-2}\overline{A_1}) = A_{-2}\overline{A_ {-1}} \in G,\\
        &(A_{-2}\overline{A_{-1}})^{H^2}=A_1\overline{A_2} \in G.
    \end{align*}
    We now have $A_1\overline{A_2}\in G$. Finally,
    \begin{align*}
    &(C_0\overline{B_{-1}})(B_{-1}\overline{B_1})(B_1\overline{C_1}) = C_0\overline{C_1} \in G,\\
    &(C_0\overline{C_1})^H=C_1\overline{C_2} \in G.
    \end{align*}
    It follows by Lemma~\ref{lem:a1a2} and Proposition~\ref{prop:loch} that $G$ is $\mod(S(n))$.
\end{proof}

\bibliographystyle{amsplain}

\bibliography{references.bib}

\end{document}